\documentclass[reqno,11pt]{amsart}
\usepackage{mathrsfs}
\usepackage[T1]{fontenc}
\usepackage{mathrsfs}
\usepackage{latexsym}
\usepackage[dvips]{graphics}
\usepackage{epsfig}
\usepackage{amsmath,amsfonts,amsthm,amssymb,amscd}
\input amssym.def
\input amssym.tex
\usepackage{color}
\usepackage[all]{xypic}

\newcommand{\me}{\mathrm{e}}

\newcommand{\mpi}{\mathrm{\pi}}

\newtheorem{remark}{Remark}[section]
\newtheorem{lemma}{Lemma}[section]

\newtheorem{prop}{Proposition}[section]
\newtheorem{theorem}{Theorem}[section]

\pagestyle{plain}

\begin{document}

\title{On the blow-up structure for the generalized periodic Camassa-Holm and Degasperis-Procesi equations}
\author{Ying Fu}
\address{Ying Fu\newline
Department of Mathematics\\
Northwest University\\
Xi'an, 710069\\
P. R. China}
\email{fuying@nwu.edu.cn}
\author{Yue Liu}
\address{Yue Liu (corresponding author)\newline
 Department of Mathematics, University of Texas, Arlington, TX 76019-0408}
\email{yliu@uta.edu}

\author{Changzheng Qu}
\address{Changzheng Qu\newline
Department of Mathematics\\
Northwest University\\
Xi'an, 710069\\
P. R. China} \email{czqu@nwu.edu.cn}
\thanks{The work of Fu is partially supported by the NSF-China grant-11001219 and the Research Foundation of Northwest University in China grant-09NW23.
The work of Liu is partially supported by the NSF grant
DMS-0906099 and the NHARP  grant-003599-0001-2009. The work of Qu is supported in part by the NSF-China for Distinguished Young Scholars grant-10925104.}

\maketitle \numberwithin{equation}{section}

\begin{abstract}  Considered herein are the  generalized
Camassa-Holm and Degasperis-Procesi equations in the spatially
periodic setting. The precise blow-up scenarios of strong solutions
are derived for both of equations. Several conditions on the initial
data guaranteeing the development of singularities in finite time
for strong solutions of these two equations are established. The
exact blow-up rates are also determined. Finally, geometric
descriptions of these two integrable equations from non-stretching
invariant curve flows in centro-equiaffine geometries,
pseudo-spherical surfaces and affine surfaces are given.
\end{abstract}
\vskip 0.1cm

\noindent \small {\it Key words and phrases.}\ The Camassa-Holm
equation, The Degasperis-Procesi equation, The Hunter-Saxton
equation, Blow-up, Wave breaking.

\noindent \small 2000 {\it Mathematics Subject Classification.} \
{\it Primary:} 35B30, 35G25.

\section{Introduction}
In this paper, we are concerned with the initial-value problem
associated with the generalized periodic Camassa-Holm ($\mu$CH)
equation \cite{khe}
\begin{equation}\label{e1.1}
\left\{
 \begin{array}{ll}
\begin{split}
&\mu(u_t)-u_{xxt}=-2\mu(u)u_x+2u_xu_{xx}+uu_{xxx}, \quad t > 0,  \quad x \in \mathbb{R}, \\
&u(0,x)=u_0(x), \qquad x \in \mathbb{R}, \\
&u(t,x+1)=u(t,x),\quad\quad\quad t \ge 0, \quad x\in\mathbb{R},
 \end{split}
\end{array} \right.
\end{equation}
where $u(t,x)$ is a time-dependent function on the unit circle
$\mathbb{S}=\mathbb{R}/\mathbb{Z}$ and
$\mu(u)=\int_{\mathbb{S}}u(t,x)dx$ denotes its mean. Obviously, if
$\mu(u)=0$, which implies that $ \mu(u_t) = 0, $  then this equation
reduces to  the Hunter-Saxton (HS) equation \cite{hun1}, which is a
short wave limit of the Camassa-Holm (CH) equation \cite{cam}.

We also consider in the paper the initial-value problem associated with
the generalized periodic Degasperis-Procesi ($\mu$DP) equation
\cite{len2}
\begin{equation}\label{e1.2}
\left\{
 \begin{array}{ll}
\begin{split}
&\mu(u_t)-u_{xxt}=-3\mu(u)u_x+3u_xu_{xx}+uu_{xxx}, \quad t > 0, \quad x \in \mathbb{R}, \\
&u(0,x)=u_0(x),  \qquad x \in \mathbb{R}, \\
&u(t,x+1)=u(t,x),\quad\quad\quad t \ge 0, \quad x\in\mathbb{R},
 \end{split}
\end{array} \right.
\end{equation}
where $u(t,x)$ and $\mu(u)$ are the same as in the above. Setting
$\mu(u)=0$,  this equation becomes the short wave limit of the
Degasperis-Procesi (DP) equation \cite{deg1} or the $\mu$Burgers equation
\cite{len2}.

It is known that the Camassa-Holm equation and the
Degasperis-Procesi equation are the cases $\lambda=2$ and
$\lambda=3$, respectively, of the following family of equations
\begin{equation}\label{f1.1}
m_t + u m_x + \lambda u_x m = 0,
\end{equation}
with $ m = Au $ and $ A = 1 - \partial_x^2, $ where each equation in
the family admits peakons \cite{deg2} although only $\lambda=2$ and
$\lambda=3$ are believed to be integrable \cite{cam, deg1}.

It is observed that the corresponding $\mu$-version of the family
 is  also given by (\ref{f1.1}) with $ m = Au, $ $ A = \mu - \partial_x^2, $ where the choices $\lambda=2$ and $\lambda=3$
yield the generalized equations, i.e. the $\mu$CH and $\mu$DP
equations, respectively.

It is clear that the closest relatives of the $\mu$CH equation are the Camassa-Holm equation
with $ A = 1 - \partial_x^2 $
\begin{equation}\label{1.2}
u_t-u_{txx}+3uu_x=2u_xu_{xx}+uu_{xxx},
\end{equation}
and the Hunter-Saxton equation with $ A = - \partial_x^2 $
 \begin{equation}\label{1.3}
-u_{txx}=2u_xu_{xx}+uu_{xxx}.
\end{equation}
Both of the CH equation and the HS equation have attracted a lot of attention among the
integrable systems and the PDE communities. The Camassa-Holm
equation was introduced  in \cite{cam} as a shallow water
approximation and has a bi-Hamiltonian structure \cite{fuc}. The
Hunter-Saxton equation firstly appeared in \cite{hun1} as an
asymptotic equation for rotators in liquid crystals.

The Camassa-Holm equation is a completely integrable system with a
bi-Hamiltonian structure and hence it possesses an infinite sequence
of conservation laws, see \cite{con6} for the periodic case. It
admits soliton-like solutions (called peakons) in both periodic and
non-periodic setting \cite{cam}. The Camassa-Holm equation describes
geodesic flows on the infinite dimensional group ${\mathcal
D}^s(\mathbb{S})$ of orientation-preserving diffeomorphisms of the
unit circle $\mathbb{S}$ of Sobolev class $H^s$ and endowed with a
right-invariant metric by the $H^1$ inner product \cite{kou,mis2}.
The Hunter-Saxton equation also describes the geodesic flow on the
homogeneous space of the group $\mathcal{D}^s(\mathbb{S})$ modulo
the subgroup of rigid rotations $Rot(\mathbb{S})\simeq\mathbb{S}$
equipped with the $\dot{H}^1$ right-invariant metric \cite{len1},
which at the identity is
$$\langle u,v\rangle_{\dot{H}^1}=\int_{\mathbb{S}}u_xv_x dx.$$
This equation possesses a bi-Hamiltonian structure and is formally integrable (see \cite{hun2}).

Another remarkable property of the Camassa-Holm equation is the
presence of breaking waves (i.e. the solution remains bounded while
its slope becomes unbounded in finite time \cite
{cam, con1, con2, con4, con6,  mis1, wh}).
 Wave breaking is one of the most intriguing long-standing problems of water wave
theory \cite{wh}.

Another important integrable equation admitting peakon solitons is
the Degasperis-Procesi equation \cite{deg1} and it takes the form
\begin{equation*}
u_t-u_{xxt}+4uu_x=3u_xu_{xx}+uu_{xxx}.
\end{equation*}
It is regarded as a model for nonlinear shallow water dynamics and
its asymptotic accuracy is the same as for the Camassa-Holm shallow
water equation, and it can also be obtained from the governing
equations for water waves \cite{conl2}.  The Degasperis-Procesi
equation is a geodesic flow of a rigid invariant symmetric linear
connection on the diffeomorphism group of the circle \cite{esc1}.
More interestingly, it has the shock peakons in both periodic
\cite{esc3} and non-periodic setting \cite{lu}. Wave breaking
phenomena and global existence of solutions of the
Degasperis-Procesi equation were investigated in \cite{coc,
esc3,esc4,liu}, for example.

The $\mu$CH ($\lambda=2$) was introduced by Khesin, Lenells and
Misiolek \cite{khe} (also called $\mu$HS equation). Similar to the
HS equation \cite{hun1}, the $\mu$CH equation describes the
propagation of weakly nonlinear orientation waves in a massive
nematic liquid crystal  with external magnetic filed and
self-interaction. Here, the solution $ u(t, x) $ of the $\mu$CH
equation is the director field of a nematic liquid crystal, $ x $ is
a space variable in a reference frame moving with the linearized
wave velocity, and $ t $ is a slow time variable. Nematic liquid
crystals are fields consisting of long rigid molecules.  The $\mu$CH
equation is an Euler equation on $\mathcal{D}^s(\mathbb{S})$ (the
set of circle diffeomorphism of the Sobolev class $ H^s$) and it
describes the geodesic flow on $\mathcal{D}^s(\mathbb{S})$  with the
right-invariant metric given at the identity by the inner product
\cite{khe} $$\langle
u,v\rangle=\mu(u)\mu(v)+\int_{\mathbb{S}}u_xv_x d x.$$

It was shown in \cite{khe} that the $\mu$CH equation is formally
integrable and can be viewed as the  compatibility condition
 between
 $$\psi_{xx}=\xi m \psi$$
 and
 $$\psi_t=\left (\frac1{2\xi}-u \right )\psi_x+\frac1{2}u_x\psi$$
 where $\xi \in\mathbb{C} $ is a spectral parameter and $m=\mu(u)-u_{xx}$.

On the other hand, the $\mu$CH equation admits bi-Hamiltonian structure and infinite
hierarchy of conservation laws. The first few conservation laws in the hierarchy are
\begin{eqnarray*}
H_0=\int_{\Bbb S}m\ d x, \quad H_1=\frac1{2}\int_{\Bbb S} mu \ d x,
\quad H_2=\int_{\Bbb S}\left (\mu(u)u^2+\frac1{2}uu_x^2 \right )  d x.
\end{eqnarray*}
Whereas the Hunter-Saxton equation does not have any bounded traveling-wave solutions at all, the $\mu$CH equation
admits traveling waves that can be regarded as the appropriate candidates for solitons. Moreover, the $\mu$CH equation
admits not only periodic one-peakon solution $ u(t, x) = \varphi(x-ct) $ where
$$
\varphi(x) = \frac{c}{26} (12 x^2 + 23)
$$
for $ x \in [ - \frac{1}{2}, \frac{1}{2} ] $ and $ \varphi $ is extended periodically
to the real line, but also the multi-peakons of the form
$$u=\sum^N_{i=1}p_i(t)g(x-q_i(t)),$$
where $g(x)=\frac1{2}x(x-1)+\frac{13}{12}$ is the Green's function of the operator $(\mu -\partial_x^2)^{-1}$.

The $\mu$DP equation  ($\lambda = 3$) was firstly introduced by
Lenells, Misiolek and Ti\u{g}lay in \cite{len2}. It can be formally
described as an evolution equation on the space of tensor densities
over the Lie algebra of smooth vector fields on the circle ${\mathbb
S}$. As mentioned in \cite{len2}, such geometric interpretation is
not completely satisfactory. Recently, Escher, Kohlmann and Kolev
\cite{esc2} verified that the periodic $\mu$DP equation describes
the geodesic flow of a right-invariant affine connection on the
Fr\'{e}chet Lie group ${\rm Diff}^{\infty}({\Bbb S})$ of all smooth
and orientation-preserving diffeomorphisms of the circle ${\Bbb S}$.
The $\mu$DP equation admits the Lax pair formulations
$$\psi_{xxx}=-\xi m \psi$$
and
$$\psi_t=-\frac1{\xi}\psi_{xx}-u\psi_x+u_x\psi,$$
where $\xi\in\mathbb{C}$ is a spectral parameter and
$m=\mu(u)-u_{xx}$. Similar to the $\mu$CH equation, the $\mu$DP equation also admits bi-Hamiltonian
structure and infinite hierarchy of conservation laws, and it is formally integrable \cite{len2}. The
first few conservation laws in the hierarchy are
\begin{eqnarray*}
\tilde{H}_0=-\frac{9}{2}\int_{\Bbb S} m  d x, \quad
\tilde{H}_1=\frac1{2}\int_{\Bbb S}u^2  d x, \quad
\tilde{H}_2=\int_{\Bbb S}\left
(\frac{3}{2}\mu(u)(A^{-1}\partial_xu)^2+\frac1{6}u^3 \right )  d x.
\end{eqnarray*}
In addition to the peakon solutions same as those of the $\mu$CH
equation, the $\mu$DP equation admits shock-peakon solutions.
$$u=\sum^N_{i=1}[p_i(t)g(x-q_i(t))+s_i(t)g'(x-q_i(t)],$$
where
\begin{equation*}
g'(x)=\left\{
\begin{array}{ll}
0 \quad\quad\quad x=0, \\
x-\frac1{2} \quad x\in(0,1)\end{array}\right.\end{equation*} is the
derivative of $g(x)$ assigning the value zero to the $g'(0)$.

The goal of the present paper is to derive some better conditions of
blow-up solutions and determine blow-up rate for the $\mu$CH and
$\mu$DP equations as well as give new geometric descriptions of
these two equations through invariant curve flows in
centro-equiaffine geometries and pseudo-spherical surfaces or affine
surfaces.

To establish blow-up results in view, we use the method of the
Lyapunov functions $ V(t)=\int_{\mathbb{S}} u_x^3(t, x)  dx $
introduced in \cite{con3} to find some sufficient conditions of
blow-up solutions for the $\mu$CH equation ({\bf Theorem \ref{t4.3}} and
{\bf Theorem \ref{t4.6}}) and the $\mu$DP equation ({\bf Theorem \ref{t4.4}}).
Based on the conservation laws $H_0$, $H_1
$ and $ H_2 $ with the best constant in the Sobolev imbedding $
H^1(\mathbb{S}) \subset L^{\infty}(\mathbb{S}), $ we
are able to  improve significantly blow-up results shown in \cite{khe} and \cite{len2}.

It is noted that  the norm
$\|u(t)\|_{L^{\infty}}$ of the $\mu$DP equation is not uniformly
bounded. To determine a better condition of blow-up solutions, we can  employ the method of
characteristics along a proper choice of a trajectory $ q(t, x) $
defined in (\ref{e1.8}) which captures some zero of the flow $ u(t,
x). $ Using this new method of characteristics together with the
conservation laws, we can derive  an improved blow-up result to guarantee
the slope of the flow tends to negative infinity for the $\mu$DP
equation ({\bf Theorem \ref{t4.7}}) (also see {\bf Theorem
\ref{t4.5}} for the  $\mu$CH equation). This method is also expected to have further
applications in other nonlinear dispersive equations with a part of
the Burgers equation.

The rest of the paper is organized as follows. In Section 2, we
present some properties and estimates of the solutions for the
$\mu$CH and $\mu$DP equations, which will be used for establishing
blow-up results. The main part of the paper, Section 3 is to derive
some precise blow-up scenarios of strong solutions and establish
various results of blow-up solutions with certain initial profiles.
The exact blow-up rate of solutions for these two equations will be
determined in Section 4. Finally in Appendix A, we obtain the
$\mu$CH and $\mu$DP equations again from non-stretching invariant
curve flows in the two-dimensional and three-dimensional
centro-equiaffine geometries, respectively. We also show that both
of equations describe pseudo-spherical surfaces and affine surfaces,
respectively.

 \vskip 0.1cm

\noindent {\it Notation}. Throughout this paper, we identity all
spaces of periodic functions with function spaces over the unit
circle $\mathbb{S}$ in $\mathbb{R}^2$, i. e.
$\mathbb{S}=\mathbb{R}/\mathbb{Z}$. Since all space of functions are
over $\mathbb{S}$, for simplicity, we drop $\mathbb{S}$ in our
notations of function spaces if there is no ambiguity. Throughout the paper, for a given Banach space $Z$, we denote its norm by
$\|\cdot\|_Z$.

\section{Preliminaries}

In this section, we first present the Sobolev-type inequalities
which play a key role to obtain  blow-up results for the
initial-value problem (\ref{e1.1}) and (\ref{e1.2}) in the sequel.
Then based on the first few conservation laws, we will prove some
priori estimates.

\begin{lemma}\label{l2.1}\cite{con2}
If $f \in H^3(\mathbb{S}) $ is such that $\int_{\mathbb{S}}f(x)\ dx=a_0/2,$ then
for every $\varepsilon>0$, we have
$$\max\limits_{x \in \mathbb{S}}f^2(x)\le\dfrac{\varepsilon+2}{24}\int_{\mathbb{S}}f^2_x(x)  d x
+\dfrac{\varepsilon+2}{4\varepsilon}a^2_0.$$
\end{lemma}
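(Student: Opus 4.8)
The plan is to prove this Poincar\'e-Wirtinger-type inequality by working with the Fourier series of $f$ on the circle $\mathbb{S}=\mathbb{R}/\mathbb{Z}$. Write $f(x)=\sum_{k\in\mathbb{Z}}\widehat{f}(k)\me^{2\pi\mi k x}$, so that the mean condition $\int_{\mathbb{S}}f(x)\,dx=a_0/2$ says exactly $\widehat{f}(0)=a_0/2$. Then $f(x)=\widehat{f}(0)+g(x)$ where $g$ has zero mean, and $\int_{\mathbb{S}}f_x^2\,dx=\int_{\mathbb{S}}g_x^2\,dx=4\pi^2\sum_{k\neq 0}k^2|\widehat{f}(k)|^2$. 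First I would estimate $\max_{x}|g(x)|$ by the triangle inequality in the form $\max_x|g(x)|\le\sum_{k\neq 0}|\widehat{f}(k)|$, and then apply Cauchy-Schwarz with the weight $k^2$: writing $\sum_{k\neq0}|\widehat{f}(k)| = \sum_{k\neq0}\frac{1}{|k|}\cdot|k||\widehat{f}(k)| \le \big(\sum_{k\neq0}\frac{1}{k^2}\big)^{1/2}\big(\sum_{k\neq0}k^2|\widehat{f}(k)|^2\big)^{1/2}$. Since $\sum_{k\neq0}k^{-2}=\pi^2/3$, this gives $\max_x|g(x)|^2\le\frac{\pi^2}{3}\cdot\frac{1}{4\pi^2}\int_{\mathbb{S}}f_x^2\,dx=\frac{1}{12}\int_{\mathbb{S}}f_x^2\,dx$.

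Next I would combine this with the splitting $f=\widehat{f}(0)+g$. For any $\delta>0$, $f^2\le(1+\delta)g^2+(1+\delta^{-1})\widehat{f}(0)^2$ pointwise (Young's inequality for the cross term), hence
\[
\max_{x\in\mathbb{S}}f^2(x)\le\frac{1+\delta}{12}\int_{\mathbb{S}}f_x^2\,dx+\frac{1+\delta^{-1}}{4}a_0^2.
\]
Setting $\delta=\varepsilon/2$ gives $\frac{1+\delta}{12}=\frac{\varepsilon+2}{24}$ and $\frac{1+\delta^{-1}}{4}=\frac{1+2/\varepsilon}{4}=\frac{\varepsilon+2}{4\varepsilon}$, which is exactly the claimed bound. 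The hypothesis $f\in H^3(\mathbb{S})$ is more than enough to justify the pointwise convergence of the Fourier series and all the manipulations above (indeed $f\in H^1$ suffices for the final inequality, with $H^3$ presumably assumed only for consistency with the regularity used elsewhere in the paper).

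The main obstacle, such as it is, is getting the constants to land exactly on the stated values rather than something merely comparable; this hinges on the two sharp identities $\sum_{k\neq0}k^{-2}=\pi^2/3$ (which produces the $\frac{1}{12}$) and the correct bookkeeping in Young's inequality so that the single free parameter $\varepsilon$ simultaneously yields the coefficients $\frac{\varepsilon+2}{24}$ and $\frac{\varepsilon+2}{4\varepsilon}$. One should double-check that the bound $\max_x|g(x)|\le\sum_{k\neq0}|\widehat{f}(k)|$ together with Cauchy-Schwarz is not wasteful enough to spoil the constant --- it is not, because equality in Cauchy-Schwarz is approached by concentrating $|\widehat f(k)|$ proportionally to $|k|^{-2}$, i.e. by functions close to the Green's function $g(x)=\tfrac12 x(x-1)+\tfrac{13}{12}$ already appearing in the paper, which is precisely the extremal profile. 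Alternatively, one could avoid Fourier series entirely and argue directly: reduce to zero-mean $g$, note $\max g$ and $\min g$ have opposite signs, write $\max_x|g(x)|$ in terms of $\int|g_x|$ over a suitable arc and optimize; but the Fourier approach is cleanest for pinning down the sharp constant $\tfrac{1}{12}$.
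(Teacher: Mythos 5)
Your proof is correct, and the constants come out exactly as stated: the Parseval identity $\int_{\mathbb{S}}f_x^2\,dx=4\pi^2\sum_{k\neq0}k^2|\widehat f(k)|^2$, the value $\sum_{k\neq0}k^{-2}=\pi^2/3$, and the choice $\delta=\varepsilon/2$ in Young's inequality indeed produce $\frac{\varepsilon+2}{24}$ and $\frac{\varepsilon+2}{4\varepsilon}$, and your sharpness remark (equality forced by $|\widehat f(k)|\propto k^{-2}$, i.e.\ the quadratic Green's-function profile) is also accurate. Note, however, that the paper itself offers no proof to compare with: Lemma \ref{l2.1} is quoted from Constantin \cite{con2}, and the text only records in Remark 2.1 the zero-mean consequence $\max_x f^2\le\frac1{12}\int_{\mathbb{S}}f_x^2\,dx$, which your argument recovers as its key intermediate step. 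For what it is worth, the same sharp constant can be reached without Fourier series, in the spirit of the cited source: for zero-mean $g$ one has the representation $g(x)=\int_0^1\left(y-\tfrac12\right)g'(x+y)\,dy$ (integrate by parts and use periodicity and the zero mean), so Cauchy--Schwarz gives $|g(x)|\le\|y-\tfrac12\|_{L^2}\|g'\|_{L^2}=\tfrac1{\sqrt{12}}\|g'\|_{L^2}$ directly, after which the same splitting $f=\tfrac{a_0}{2}+g$ and Young's inequality finish the proof; your Fourier route and this one are equivalent in content, yours making the extremizer transparent on the frequency side. Your observation that $H^1$ suffices (with $H^3$ only inherited from the source) matches the paper's own density remark.
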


\begin{remark}  Since $H^3$ is dense in $H^1,$
Lemma \ref{l2.1} also holds for every $f \in H^1(\mathbb{S})$. Moreover, if
$\int_\mathbb{S}f(x)\ dx=0$, from the
 deduction of this lemma we arrive at the following inequality
\begin{equation}\label{e2.1}
\max\limits_{x\in
\mathbb{S}}f^2(x)\le\dfrac1{12}\int_{\mathbb{S}}f^2_x(x) dx,\quad
x\in\mathbb{S}, \quad f \in H^1(\mathbb{S}).
\end{equation}
\end{remark}

\begin{lemma}\label{l2.2}\cite{but}
For every $\  f(x)\in H^1(a,b)$ periodic and with zero average, i.e.
such that $\int^b_af(x)\ dx=0$, we have
$$\int^b_af^2(x)\ d x\le\left ( \dfrac{b-a}{2\mpi}\right)^2\int^b_a|f'(x)|^2  dx,$$
and equality holds if and only if
$$f(x)=A\cos\left(\dfrac{2\mpi x}{b-a}\right)+B\sin\left(\dfrac{2\mpi x}{b-a}\right).$$
\end{lemma}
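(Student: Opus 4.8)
The plan is to prove this classical Wirtinger-type inequality by Fourier series, an approach that delivers the sharp constant and the equality case simultaneously. Set $L=b-a$ and, after a translation, regard $f$ as an $L$-periodic function in $H^1$. Expand
\[
f(x)=\sum_{n\in\mathbb{Z}}c_n\,\me^{2\mpi\mi nx/L},
\]
and observe that the zero-average hypothesis $\int_a^b f(x)\,dx=0$ is exactly the condition $c_0=0$. Since $f'\in L^2$, its Fourier coefficients are $\tfrac{2\mpi\mi n}{L}c_n$.

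The next step is to apply Parseval's identity to both $f$ and $f'$:
\[
\int_a^b|f(x)|^2\,dx=L\sum_{n\neq 0}|c_n|^2,\qquad
\int_a^b|f'(x)|^2\,dx=\left(\frac{2\mpi}{L}\right)^2 L\sum_{n\neq 0}n^2|c_n|^2.
\]
Because $n^2\ge 1$ for every nonzero integer $n$, the series for $f'$ dominates that for $f$ term by term, which gives
\[
\int_a^b|f'(x)|^2\,dx\ge\left(\frac{2\mpi}{L}\right)^2\int_a^b|f(x)|^2\,dx,
\]
and rearranging yields the stated inequality. For the equality case, note that $n^2|c_n|^2>|c_n|^2$ whenever $c_n\neq 0$ and $|n|\ge 2$, so equality forces $c_n=0$ for all $|n|\ge 2$; hence $f(x)=c_1\me^{2\mpi\mi x/L}+c_{-1}\me^{-2\mpi\mi x/L}$, and imposing that $f$ be real (so $c_{-1}=\overline{c_1}$) and writing $c_1$ via its real and imaginary parts produces precisely $f(x)=A\cos(2\mpi x/L)+B\sin(2\mpi x/L)$. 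A direct substitution confirms that every such function attains equality.

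The main point requiring care is the justification of the Fourier-analytic manipulations for $f$ only in $H^1$ rather than smooth: that the Fourier series converges to $f$ in $H^1$, that Parseval applies to $f'$ in the $L^2$ sense, and that differentiation acts on coefficients as claimed. This rests on the density of trigonometric polynomials in $H^1(\mathbb{S})$ together with the characterization $H^1=\{f:\sum_n(1+n^2)|c_n|^2<\infty\}$, so no ideas beyond the elementary Hilbert-space theory of Fourier series are needed. (An alternative route via the calculus of variations — minimizing $\int f'^2$ subject to $\int f^2=1$ and $\int f=0$, whose Euler--Lagrange equation is $f''+\lambda f=0$ with periodic boundary conditions — also works, but the Fourier argument is shorter and makes the equality discussion transparent.)
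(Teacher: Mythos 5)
Your Fourier-series proof is correct and complete: the zero-average hypothesis kills $c_0$, Parseval converts both integrals into weighted $\ell^2$ sums, the inequality $n^2\ge 1$ for $n\neq 0$ gives the bound with the sharp constant $\left(\frac{b-a}{2\mpi}\right)^2$, and the equality analysis correctly isolates the $n=\pm 1$ modes. The paper itself offers no proof of this lemma --- it is quoted verbatim from the cited reference \cite{but}, a calculus-of-variations text whose argument is presumably the variational one you mention in passing (minimizing the Rayleigh quotient, with Euler--Lagrange equation $f''+\lambda f=0$ under periodic boundary conditions). Your Fourier route is the other standard proof; it is shorter, makes the equality case transparent, and your closing remark correctly identifies the only point needing care for $f\in H^1$, namely that Parseval and term-by-term differentiation are justified by the coefficient characterization of $H^1$.
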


\begin{lemma}\label{l2.3}\cite{con5}
Let $T>0$ and $u\in C^1([0,T);H^2(\mathbb{S}))$. Then for every $t\in[0,T)$,
there exists at least one point $\xi(t)\in\mathbb{S}$ with
$w(t):=\inf\limits_{x\in\mathbb{S}}u_x(t,x)=u_x(t,\xi(t))$. The
function $w(t)$ is absolutely continuous on (0,T) with
$$\frac{d w}{d t}=u_{xt}(t,\xi(t)),\quad  a.e. \quad on \quad(0,T).$$
\end{lemma}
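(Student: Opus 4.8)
The plan is to first upgrade the hypothesis to a more usable regularity statement, then to establish local Lipschitz continuity of $w$, and finally to identify its derivative pointwise at each point of differentiability. Since $u\in C^1([0,T);H^2(\mathbb{S}))$ and both $\partial_x\colon H^2(\mathbb{S})\to H^1(\mathbb{S})$ and the Sobolev embedding $H^1(\mathbb{S})\hookrightarrow C(\mathbb{S})$ are bounded linear maps, the map $t\mapsto u_x(t,\cdot)$ belongs to $C^1([0,T);C(\mathbb{S}))$ with derivative $t\mapsto u_{xt}(t,\cdot)$. In particular $(t,x)\mapsto u_x(t,x)$ and $(t,x)\mapsto u_{xt}(t,x)$ are jointly continuous, and the fundamental theorem of calculus for Banach-space valued maps, evaluated at a point $x$, gives $u_x(s,x)-u_x(t,x)=\int_t^s u_{xt}(\tau,x)\,d\tau$ for all $x\in\mathbb{S}$ and $s,t\in[0,T)$. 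The first assertion of the lemma is then immediate: for fixed $t$ the continuous function $u_x(t,\cdot)$ attains its infimum on the compact set $\mathbb{S}$, so some $\xi(t)$ with $w(t)=u_x(t,\xi(t))$ exists.

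Next I would show $w$ is locally Lipschitz on $(0,T)$. Fix $[a,b]\subset(0,T)$ and put $M=\max_{[a,b]\times\mathbb{S}}|u_{xt}|<\infty$. For $a\le t<s\le b$, the bound $w(s)\le u_x(s,\xi(t))$ together with $w(t)=u_x(t,\xi(t))$ gives $w(s)-w(t)\le\int_t^s u_{xt}(\tau,\xi(t))\,d\tau\le M(s-t)$, and the symmetric choice of $\xi(s)$ gives $w(t)-w(s)\le M(s-t)$; hence $|w(s)-w(t)|\le M|s-t|$. Thus $w$ is locally Lipschitz on $(0,T)$, in particular absolutely continuous on compact subintervals and differentiable at a.e.\ $t\in(0,T)$.

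It remains to compute $w'(t_0)$ at a point $t_0$ of differentiability, and here the only delicacy is that $\xi(t)$ need not be unique; I would therefore avoid referring to any $\xi(s)$ with $s\ne t_0$ and use only the fixed minimizer $\xi(t_0)$. For $s>t_0$ one has $w(s)-w(t_0)\le u_x(s,\xi(t_0))-u_x(t_0,\xi(t_0))=\int_{t_0}^s u_{xt}(\tau,\xi(t_0))\,d\tau$, so dividing by $s-t_0$ and letting $s\downarrow t_0$ (the average of the continuous function $\tau\mapsto u_{xt}(\tau,\xi(t_0))$ over the shrinking interval tends to its value at $t_0$) gives $w'(t_0)\le u_{xt}(t_0,\xi(t_0))$. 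For $s<t_0$ one has $w(t_0)-w(s)\ge u_x(t_0,\xi(t_0))-u_x(s,\xi(t_0))=\int_s^{t_0}u_{xt}(\tau,\xi(t_0))\,d\tau$, so dividing by $t_0-s$ and letting $s\uparrow t_0$ gives $w'(t_0)\ge u_{xt}(t_0,\xi(t_0))$. Combining the two inequalities yields $w'(t_0)=u_{xt}(t_0,\xi(t_0))$ for a.e.\ $t_0$, which in passing shows this value is independent of the chosen minimizer.

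The step I expect to be the main obstacle is precisely this last identification in the presence of non-unique minimizers: a direct two-sided estimate of $w(s)-w(t_0)$ would involve $u_{xt}(\cdot,\xi(s))$ and would require control of $\xi(s)$ as $s\to t_0$, which in turn would need a compactness argument on $\mathbb{S}$ together with the joint continuity of $u_x$ to guarantee that subsequential limits of $\xi(s)$ are again minimizers at $t_0$. Replacing the two-sided difference quotient by the two one-sided quotients above circumvents this entirely, since each one-sided bound uses only the fixed point $\xi(t_0)$. The remaining ingredients — the Banach-space fundamental theorem of calculus, the joint continuity of $u_{xt}$, and attainment of the infimum — are routine once the regularity upgrade in the first step is in place.
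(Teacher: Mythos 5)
Your proof is correct: the paper does not prove Lemma \ref{l2.3} but cites \cite{con5}, and your argument — regularity upgrade to $u_x\in C^1([0,T);C(\mathbb{S}))$, the two-sided Lipschitz estimate using the minimizers $\xi(t)$ and $\xi(s)$, then identification of $w'(t_0)$ at points of differentiability via one-sided difference quotients anchored at the fixed minimizer $\xi(t_0)$ — is essentially the same as the standard proof in that reference.
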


\begin{lemma}\label{l2.5}\cite{kat}
If $r>0$, then $H^r\cap L^{\infty}$ is an algebra. Moreover
$$\|fg\|_{H^r}\le c(\|f\|_{L^{\infty}}\|g\|_{H^r}+\|f\|_{H^r}\|g\|_{L^{\infty}}),$$
where $c$ is a constant depending only on $r$.
\end{lemma}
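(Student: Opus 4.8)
The plan is to prove this by a frequency-localized (Littlewood--Paley / Bony paraproduct) argument, which is the standard route to Kato--Ponce type estimates. Passing to Fourier series on $\mathbb{S}$ and setting $\langle n\rangle=(1+n^2)^{1/2}$, so that $\|f\|_{H^r}^2=\sum_{n\in\mathbb{Z}}\langle n\rangle^{2r}|\hat f(n)|^2$, the single elementary ingredient is the bound $\langle n\rangle^r\le C_r\bigl(\langle n-k\rangle^r+\langle k\rangle^r\bigr)$, valid for all $n,k\in\mathbb{Z}$ and immediate from $|n|\le|n-k|+|k|$. I would first point out why the naive route fails: inserting this bound into $\widehat{fg}=\hat f*\hat g$ and applying Young's inequality for series only gives $\|fg\|_{H^r}\lesssim\|f\|_{H^r}\|\hat g\|_{\ell^1}+\|\hat f\|_{\ell^1}\|g\|_{H^r}$, and $\|\hat f\|_{\ell^1}$ bounds $\|f\|_{L^\infty}$ from above, not conversely. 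This is exactly what forces a frequency decomposition.

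So I would fix a Littlewood--Paley partition $1=\sum_{j\ge-1}\varphi_j$ with $\varphi_j$ supported where $|n|\sim2^j$, let $\Delta_j$ be the associated projections and $S_j=\sum_{i<j}\Delta_i$, and use Bony's decomposition $fg=T_fg+T_gf+R(f,g)$, where $T_fg=\sum_j(S_{j-1}f)\Delta_jg$ and $R(f,g)=\sum_{|i-j|\le1}\Delta_if\,\Delta_jg$. The paraproducts are the easy terms: each summand $(S_{j-1}f)\Delta_jg$ has frequencies comparable to $2^j$, so $\Delta_k(T_fg)$ involves only boundedly many indices $j$ near $k$, and since $\|S_{j-1}f\|_{L^\infty}\lesssim\|f\|_{L^\infty}$ one gets $2^{kr}\|\Delta_k(T_fg)\|_{L^2}\lesssim\|f\|_{L^\infty}\sum_{|j-k|\le C}2^{jr}\|\Delta_jg\|_{L^2}$; squaring and summing in $k$ yields $\|T_fg\|_{H^r}\lesssim\|f\|_{L^\infty}\|g\|_{H^r}$, and symmetrically $\|T_gf\|_{H^r}\lesssim\|g\|_{L^\infty}\|f\|_{H^r}$.

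The remainder $R(f,g)=\sum_j\Delta_jf\,\widetilde\Delta_jg$, with $\widetilde\Delta_j=\Delta_{j-1}+\Delta_j+\Delta_{j+1}$, is the step I expect to be the main obstacle, and it is precisely where the hypothesis $r>0$ is used. Here the two factors in each block $u_j:=\Delta_jf\,\widetilde\Delta_jg$ carry comparable frequencies, so no frequency cancels and the block is only supported in $\{|n|\lesssim2^j\}$; the output can therefore land at arbitrarily low frequency. For such a sum of low-frequency-supported blocks one still has $\|\sum_ju_j\|_{H^r}\lesssim\bigl(\sum_j2^{2jr}\|u_j\|_{L^2}^2\bigr)^{1/2}$, but only because $r>0$: the frequency-$2^k$ part of $\sum_ju_j$ receives contributions only from blocks $u_j$ with $j\gtrsim k$, and after weighting by $2^{kr}$ one is reduced to convolving the $\ell^2$ sequence $(2^{jr}\|u_j\|_{L^2})_j$ with the sequence $(2^{-lr})_{l\gtrsim0}$, which lies in $\ell^1$ exactly when $r>0$; Young's inequality for series then closes the estimate. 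Finally $\|u_j\|_{L^2}\le\|\Delta_jf\|_{L^2}\|\widetilde\Delta_jg\|_{L^\infty}\lesssim\|g\|_{L^\infty}\|\Delta_jf\|_{L^2}$ (or the symmetric choice) gives $\|R(f,g)\|_{H^r}\lesssim\|f\|_{H^r}\|g\|_{L^\infty}$, and adding the three contributions yields $\|fg\|_{H^r}\le c\bigl(\|f\|_{L^\infty}\|g\|_{H^r}+\|f\|_{H^r}\|g\|_{L^\infty}\bigr)$; in particular $fg\in H^r\cap L^\infty$, so $H^r\cap L^\infty$ is an algebra. A reader who prefers to avoid Littlewood--Paley machinery can instead appeal directly to \cite{kat}, where this estimate is obtained by a careful splitting of the convolution integral.
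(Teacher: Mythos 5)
Your argument is correct, but note that the paper does not actually prove this lemma: it is quoted verbatim from Kato--Ponce \cite{kat}, so there is no in-paper proof to compare against. Your Littlewood--Paley/Bony route is a legitimate, self-contained alternative to that citation, and it is essentially the modern standard proof: the two paraproducts give the terms $\|f\|_{L^{\infty}}\|g\|_{H^r}$ and $\|g\|_{L^{\infty}}\|f\|_{H^r}$ via uniform $L^{\infty}$-boundedness of the $S_j$, and the remainder is where $r>0$ is genuinely needed, exactly as you isolate it, through the convolution of the $\ell^2$ sequence $\bigl(2^{jr}\|\Delta_j f\,\widetilde\Delta_j g\|_{L^2}\bigr)_j$ with the $\ell^1$ tail $\bigl(2^{-lr}\bigr)_{l\ge -C}$; your preliminary remark about why the naive convolution estimate only yields $\|\hat f\|_{\ell^1}$ factors correctly motivates the decomposition. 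The original Kato--Ponce argument proceeds differently (on $\mathbb{R}^n$, via commutator and multiplier estimates of Coifman--Meyer type rather than an explicit Bony decomposition), so what your write-up buys is transparency about the role of the hypothesis $r>0$ and a proof that transfers directly to the periodic setting $\mathbb{S}=\mathbb{R}/\mathbb{Z}$ used here, at the cost of invoking the dyadic machinery; the citation buys brevity. Two small points to make explicit if you write this out in full: the equivalence $\|h\|_{H^r}\sim\bigl(\sum_{k\ge-1}2^{2kr}\|\Delta_k h\|_{L^2}^2\bigr)^{1/2}$ for the periodic blocks, and the uniform bounds $\|S_j\|_{L^{\infty}\to L^{\infty}},\|\widetilde\Delta_j\|_{L^{\infty}\to L^{\infty}}\le C$, both of which are standard but are the places where the smooth dyadic cutoffs (rather than sharp Fourier truncations) are used.
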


\begin{lemma}\label{l2.6}\cite{kat}
If $r>0$, then
$$\|[\Lambda^r,f]g\|_{L^2}\le c(\|\partial_x f\|_{L^{\infty}}\|\Lambda^{r-1}g\|_{L^2}
+\|\Lambda^r f\|_{L^2}\|g\|_{L^{\infty}}),$$ where $c$ is a constant
depending only on $r$.
\end{lemma}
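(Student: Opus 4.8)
The plan is to prove this as the Kato--Ponce commutator estimate by Fourier analysis together with a Littlewood--Paley decomposition; since the statement is quoted from \cite{kat} one may alternatively cite that reference directly, but the route I would take is as follows. Write $\Lambda=(1-\partial_x^2)^{1/2}$, so that on $\mathbb{S}$ the Fourier multiplier of $\Lambda^r$ is $\langle k\rangle^r$ with $\langle k\rangle=(1+4\pi^2k^2)^{1/2}$. On the Fourier side the commutator reads
$$\widehat{[\Lambda^r,f]g}(k)=\sum_{j\in\mathbb{Z}}\big(\langle k\rangle^r-\langle k-j\rangle^r\big)\,\widehat f(j)\,\widehat g(k-j),$$
and the whole point is to exploit the cancellation encoded in the symbol difference $\langle k\rangle^r-\langle k-j\rangle^r$. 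First I would introduce Littlewood--Paley projections $f=\sum_p\Delta_pf$ and $g=\sum_q\Delta_qg$ and split the bilinear sum above into the three standard regimes according to which of the dyadic scales $2^p$ (frequency of $f$) and $2^q$ (frequency of $g$) dominates.

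In the low--high regime ($2^p\lesssim 2^q$, so the output frequency is comparable to $2^q$) one has $\langle k\rangle\sim\langle k-j\rangle\sim 2^q$ whenever $|j|\sim 2^p$, and applying the mean value theorem to $\xi\mapsto\langle\xi\rangle^r$ along the segment from $k-j$ to $k$, whose derivative is $O(\langle\xi\rangle^{r-1})$, yields $|\langle k\rangle^r-\langle k-j\rangle^r|\lesssim |j|\,\langle k-j\rangle^{r-1}$. Here $|j|\sim 2^p$ is precisely the symbol of $\partial_x$ at frequency $2^p$ and $\langle k-j\rangle^{r-1}$ is the symbol of $\Lambda^{r-1}$, so a Coifman--Meyer (paraproduct) bound for the resulting bilinear multiplier produces $\|\partial_x f\|_{L^\infty}\|\Lambda^{r-1}g\|_{L^2}$ for this contribution. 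In the high--low and high--high regimes ($2^q\lesssim 2^p$) one instead has $\langle k\rangle,\langle k-j\rangle\lesssim\langle j\rangle\sim 2^p$, so the triangle inequality already gives $|\langle k\rangle^r-\langle k-j\rangle^r|\le\langle k\rangle^r+\langle k-j\rangle^r\lesssim\langle j\rangle^{r}$; absorbing $\langle j\rangle^{r}$ into $\Lambda^r f$, estimated in $L^2$, and placing $g$ in $L^\infty$ produces the companion bound $\|\Lambda^r f\|_{L^2}\|g\|_{L^\infty}$. Summing the three regimes yields the claim.

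I expect the main obstacle to be twofold. First, the high--high interaction, where the output frequency can be arbitrarily low so that the dyadic pieces are not almost orthogonal in the usual sense; one must sum carefully, typically via Bernstein's inequality and the square function. Second, and more essentially, the right-hand side involves $\|\partial_x f\|_{L^\infty}$ and $\|g\|_{L^\infty}$ rather than the $\ell^1$ norms of the Fourier coefficients, so one cannot bound the bilinear sum termwise; this is exactly what forces the Littlewood--Paley machinery (square function estimates, Bernstein's inequality) in place of crude convolution inequalities, and what makes the single-derivative gain coming from the commutator structure the delicate book-keeping point. For the complete details I would refer to \cite{kat}.
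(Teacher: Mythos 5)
The paper gives no proof of this lemma---it is quoted directly from Kato--Ponce \cite{kat}---so citing that reference, as you yourself suggest, is all that is required here. Your Littlewood--Paley sketch is the standard route to the estimate and is correct in outline: the mean-value cancellation in the symbol on the low--high block producing $\|\partial_x f\|_{L^{\infty}}\|\Lambda^{r-1}g\|_{L^2}$ and the crude symbol bound on the remaining blocks producing $\|\Lambda^r f\|_{L^2}\|g\|_{L^{\infty}}$ are exactly the right split, and the genuinely delicate steps (the Coifman--Meyer bounds for the resulting bilinear multipliers and the summation of the high--high interactions, where the output frequency can be low) are appropriately flagged rather than carried out.
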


In the following, we verify some priori estimates for the $\mu$CH equation. Recall that the first two conserved quantities of the $\mu$CH
equation are
$$H_0=\int_{\Bbb S}m\ d x=\int_{\Bbb S}\left (\mu(u)-u_{xx}\right )  d x=\mu(u(t)),$$
and
$$H_1=\frac1{2}\int_{\Bbb S}mu \ d x=\frac1{2}\mu^2(u(t))+\frac1{2}\int_{\Bbb S}u^2_x(t,x)  d x.$$
It is easy to see that  $\mu(u(t))$ and $ \int_{\Bbb S}u^2_x(t,x)   d x $ are conserved in time \cite{khe}. Thus
\begin{equation}\label{2.0}
\mu(u_t)=0.
 \end{equation}

For the sake of convenience, let
\begin{equation}\label{2.1}
\mu_0=\mu(u_0) =\mu(u(t))  =\int_{\mathbb{S}} u(t, x)  d x
 \end{equation}
 and
\begin{equation}\label{2.2}
\mu_1=\left(\int_{\Bbb S}u^2_x(0,x)  d x \right)^{\frac1{2}}=\left(\int_{\Bbb S}u^2_x(t,x)  d x\right)^{\frac1{2}}.
\end{equation}
Then $\mu_0$ and $\mu_1$ are constants and independent of time $t$. Note that
$$\int_{\Bbb S}(u(t,x)-\mu_0)  d x=\mu_0-\mu_0=0.$$
 By Lemma \ref{l2.1}, we find that
$$\max\limits_{x\in \mathbb{S}}\;[u(t,x)-\mu_0]^2\le\dfrac1{12}\int_{\mathbb{S}}u^2_x(t,x)  dx=\dfrac1{12}
\int_{\mathbb{S}}u^2_x(0,x)\ d x=\dfrac1{12}\mu^2_1.$$

From the above estimate, we find that the amplitude of the wave
remains bounded in any time, that is,
$$\|u(t,\cdot)\|_{L^{\infty}}-|\mu_0|\le\|u(t,\cdot)-\mu_0\|_{L^{\infty}}\le\frac{\sqrt{3}}{6}\mu_1,$$
and so
\begin{equation}\label{e2.2}
\|u(t,\cdot)\|_{L^{\infty}}\le|\mu_0|+\frac{\sqrt{3}}{6}\mu_1.
\end{equation}
On the other hand, we have
\begin{equation}\label{e2.3}
\|u(t,x)\|^2_{L^2}=\int_{\Bbb S}u^2(t,x)\ d x\le\|u(t,\cdot)\|^2_{L^{\infty}}\le\left (|\mu_0|+\frac{\sqrt{3}}{6}\mu_1\right )^2.
\end{equation}
It then follows that
\begin{align}\label{e2.4}
\|u(t,\cdot)\|_{H^1}&=\|u(t,\cdot)\|_{L^2}+\|u_x(t,\cdot)\|_{L^2}=\left(\int_{\Bbb S}u^2(t,x) d x\right)^{\frac1{2}}
+\left(\int_{\Bbb S}u^2_x(t,x)\ d x\right)^{\frac1{2}}\nonumber\\
&\le|\mu_0|+\frac{\sqrt{3}}{6}\mu_1+\mu_1=|\mu_0|+\left (1+\frac{\sqrt{3}}{6}\right )\mu_1.
\end{align}

Similar to the conservation law $H_0$ for the $\mu$CH equation, it is easy to see that $ \displaystyle \mu(u_t)=0 $ and
$ \int_{\mathbb{S}} u(t, x) \ d x $ is also conserved in time for the $\mu$DP equation.
Set
$$\mu_0=\mu(u_0) =\mu(u(t)) =\int_{\mathbb{S}} u(t, x)   d x.$$
Since $\tilde{H}_1$ is a
conserved quantity for the $\mu$DP equation, we define that
\begin{equation}\label{2.3}
\mu_2=\left(\int_{\Bbb S}u^2(0,x)  d x\right)^{\frac1{2}}=\left(\int_{\Bbb S}u^2(t,x)  dx \right)^{\frac1{2}}.
\end{equation}
Then $\mu_2$ is a constant and independent of time $t$.

Recall in \cite{khe} that the mean of any solution $u(t,x)$ is
conserved by the flow and hence the initial value problem
(\ref{e1.1}) and (\ref{e1.2}) can be recast in the following.
\begin{equation}\label{e1.4}
\left\{
 \begin{array}{ll}
\begin{split}
&u_t+uu_x+A^{-1}\partial_x \left (\lambda\mu_0u+\frac{3-\lambda}{2}u^2_x \right )=0, \qquad t > 0, \quad x \in \mathbb{R}, \\
&u(0,x)=u_0(x),  \qquad x \in \mathbb{R},\\
&u(t,x+1)=u(t,x),\quad\quad\quad t \ge 0, \quad x\in\mathbb{R},
 \end{split}
\end{array} \right.
\end{equation}
with $ \lambda = 2 $ and $ \lambda = 3, $ respectively, where $A=\mu-\partial^2_x$ is an isomorphism between $H^s(\mathbb{S})$ and $H^{s-2}(\mathbb{S})$ with the inverse
$v=A^{-1}w$ given explicitly by
 \begin{align}
v(x)=&\left (\frac{x^2}{2}-\frac{x}{2}+\frac{13}{12} \right )\mu(w)+\left (x-\frac1{2}\right )\int^1_0\int^y_0w(s)  ds dy ,\nonumber\\
&-\int^x_0\int^y_0w(s)\ d sd y+\int^1_0\int^y_0\int^s_0w(r)\ dr ds d y .\label{e1.5}
\end{align}
Since $A^{-1}$ and $\partial_x$ commute, the following identities hold
\begin{equation}\label{e1.6}
A^{-1}\partial_xw(x)=\left (x-\frac1{2}\right )\int^1_0w(x)  d x-\int^x_0w(y)  dy+\int^1_0\int^x_0w(y)  d y d x,
\end{equation}
and
\begin{equation}\label{e1.7}
A^{-1}\partial^2_xw(x)=-w(x)+\int^1_0 w(x)  dx.
\end{equation}

The local well-posedness results of the initial value problem
(\ref{e1.4})  is already established in \cite{khe}
and \cite{len2}.
\begin{prop}\label{p2.1}
Let $u_0\in H^s(\mathbb{S})$, $s>3/2$. Then there exist a maximal life span $T>0$ and a unique
solution $ u $ to (\ref{e1.4}) such that
$$
u \in C([0,T);H^s(\mathbb{S}))\cap C^1([0,T);H^{s-1}(\mathbb{S}))
$$
which depends continuously on the initial data $u_0$.
\end{prop}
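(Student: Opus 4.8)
The plan is to treat the nonlocal equation in (\ref{e1.4}),
$u_t + uu_x = -A^{-1}\partial_x\big(\lambda\mu_0 u + \tfrac{3-\lambda}{2}u_x^2\big) =: F(u)$,
as a quasilinear transport equation with a smoothing (indeed, $H^s$-bounded) nonlocal forcing term, and to apply Kato's semigroup theory for quasilinear evolution equations (as used for the CH and DP equations). First I would set the problem in the abstract form $\frac{d v}{d t} + \mathcal{A}(v)v = G(v)$ on $Y = H^s(\mathbb{S}) \subset X = H^{s-1}(\mathbb{S})$, with $\mathcal{A}(v) = v\,\partial_x$ and $G(v) = F(v)$. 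One then verifies Kato's conditions: (i) $\mathcal{A}(v)$ is quasi-m-accretive on $X$ uniformly for $v$ in a bounded set of $Y$; (ii) $\mathcal{A}(v) \in \mathcal{L}(Y,X)$ with the Lipschitz estimate $\|(\mathcal{A}(v)-\mathcal{A}(w))z\|_X \le \mu_L\|v-w\|_X\|z\|_Y$; (iii) there is an isomorphism $Q = \Lambda^{s - (s-1)} = \Lambda$ of $Y$ onto $X$ such that $Q\mathcal{A}(v)Q^{-1} = \mathcal{A}(v) + B(v)$ with $B(v) \in \mathcal{L}(X)$ bounded uniformly on bounded sets of $Y$; and (iv) $G : Y \to Y$ is bounded on bounded sets and Lipschitz from $Y$ to $X$ (in fact from $X$ to $X$). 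Conditions (i)–(iii) are exactly the classical verifications for the transport operator $v\partial_x$ on $H^s(\mathbb{S})$, $s > 3/2$, with the commutator estimate (Lemma \ref{l2.6}) supplying the bound on $B(v) = [\Lambda, v\partial_x]\Lambda^{-1}$ in terms of $\|v\|_{H^s}$.

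The heart of the matter is condition (iv), the analysis of the nonlocal term $G(v) = -A^{-1}\partial_x(\lambda\mu_0 v + \tfrac{3-\lambda}{2}v_x^2)$. Here I would use the explicit formula (\ref{e1.6}) for $A^{-1}\partial_x$: it shows $A^{-1}\partial_x$ maps $L^1(\mathbb{S})$ (or $H^{s-2}$) boundedly into $H^{s}$ — it is an integral operator gaining essentially two derivatives composed with one derivative, hence a net gain of one derivative, so $A^{-1}\partial_x : H^{r} \to H^{r+1}$ continuously for all $r \ge -1$. Therefore $\|G(v)\|_{H^s} \lesssim |\mu_0|\,\|v\|_{H^{s-1}} + \|v_x^2\|_{H^{s-1}}$, and by Lemma \ref{l2.5} (the algebra property of $H^{s-1}$ when $s - 1 > 1/2$) we get $\|v_x^2\|_{H^{s-1}} \lesssim \|v_x\|_{L^\infty}\|v_x\|_{H^{s-1}} \lesssim \|v\|_{H^s}^2$, so $G$ is bounded on bounded subsets of $Y$. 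For the Lipschitz bound, writing $v_x^2 - w_x^2 = (v_x - w_x)(v_x + w_x)$ and using the algebra estimate again gives $\|G(v) - G(w)\|_{H^{s-1}} \lesssim (|\mu_0| + \|v\|_{H^s} + \|w\|_{H^s})\|v - w\|_{H^{s-1}}$, which is the required local Lipschitz continuity from $Y$ into $X$. (One must also note $\mu_0 = \mu(v)$ is itself a bounded linear functional of $v$, but since it enters only through the conserved-in-$t$ constant, it is cleanest to fix $\mu_0 = \mu(u_0)$ at the outset, consistent with (\ref{2.0}).)

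Once (i)–(iv) are checked, Kato's theorem yields, for each $u_0 \in H^s(\mathbb{S})$, a maximal $T = T(\|u_0\|_{H^s}) > 0$ and a unique solution $u \in C([0,T);H^s) \cap C^1([0,T);H^{s-1})$, together with continuous dependence on $u_0$ in the sense stated. I expect the main obstacle to be purely organizational rather than deep: assembling the Kato framework carefully and, in particular, verifying the commutator/smoothing estimates for the transport part uniformly in the coefficient — this is where Lemmas \ref{l2.5} and \ref{l2.6} do the real work — while the nonlocal term, thanks to the smoothing in (\ref{e1.6}), is strictly subordinate and causes no trouble. An alternative, if one prefers to avoid the semigroup machinery, is a parabolic regularization $u_t + uu_x + G(u) = \varepsilon u_{xx}$ followed by energy estimates uniform in $\varepsilon$ and a compactness/Bona–Smith argument for continuous dependence; the energy estimates again reduce to Lemma \ref{l2.5}, Lemma \ref{l2.6} and the boundedness of $A^{-1}\partial_x$ on Sobolev spaces. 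Since both routes are standard and the result is quoted from \cite{khe, len2}, I would present the Kato-theory version in compressed form, emphasizing only the two points special to the $\mu$-equations: the explicit kernel (\ref{e1.6}) giving the one-derivative smoothing of the nonlocality, and the role of $s > 3/2$ in making $H^{s-1}$ an algebra.
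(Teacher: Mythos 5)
The paper does not prove Proposition \ref{p2.1} at all: it quotes the local well-posedness from \cite{khe} and \cite{len2}, with Remark 2.2 pointing to Yin's adaptation of Kato's quasilinear semigroup method for the fact that $T$ is independent of $s$. Your sketch is precisely that standard Kato route ($\mathcal{A}(v)=v\partial_x$ on $X=H^{s-1}$, $Y=H^s$, $Q=\Lambda$, with the nonlocal term handled as a subordinate forcing via the one-derivative smoothing of $A^{-1}\partial_x$ from (\ref{e1.6})--(\ref{e1.7})), and it is correct in outline; note only that the cited sources themselves argue geometrically (an ODE on the diffeomorphism group), so your proof is an acceptable alternative rather than a reproduction. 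One small imprecision: for $3/2<s<2$ the Lipschitz-in-$X$ bound for $G$ cannot be obtained from Lemma \ref{l2.5} literally (it needs $r>0$ and $L^\infty$ control of both factors, and $v_x-w_x$ is only in $H^{s-2}$); you should instead invoke the standard bilinear estimate $\|fg\|_{H^{s-2}}\le c\,\|f\|_{H^{s-1}}\|g\|_{H^{s-2}}$, valid because $s-1>1/2$ and $|s-2|\le s-1$, which yields $\|G(v)-G(w)\|_{H^{s-1}}\le C\bigl(\|v\|_{H^s},\|w\|_{H^s}\bigr)\|v-w\|_{H^{s-1}}$ as required by Kato's theorem.
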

\begin{remark}
The maximal time of existence $ T > 0 $ in Proposition \ref{p2.1} is independent of the Sobolev index $ s > 3/2.$ See Yin \cite{yin2} for an adaptation of Kato method \cite{kat1} to the proof of this statement.
\end{remark}

In \cite{khe}  and \cite{len2}, the authors also showed  that
the $\mu$CH and $\mu$DP equations admit global (in time) solutions
and a blow-up mechanism. It is our purpose here to derive  the precise
scenarios and initial  conditions guaranteeing the blow-up of
strong solutions to the initial-value problem  (\ref{e1.1}) and
(\ref{e1.2}), which will significantly improve the results in \cite{khe} and
\cite{len2}. In the case of $\mu_0=0$, these two equations reduce to
the Hunter-Saxon equation and $\mu$Burgers equation respectively. Since these two special cases have recently
been the object of intensive study (\cite {bc, hun1, hun2, len2, yin3}, for example),  we only focus on the case of $\mu_0\neq0$ in the rest of the paper.

Given a solution $u(t,x)$  of the initial value problem (\ref{e1.4}) with initial data $u_0$, we let $t\rightarrow q(t,x)$
be the flow of $u(t,x)$, that is
\begin{equation}\label{e1.8}
\left\{
 \begin{array} {cc}
 \begin{split}
&\dfrac{d q(t,x)}{d t}=u(t,q(t,x)), \qquad  t > 0, \quad x \in \mathbb{R},\\
&q(0,x)=x.\\
\end{split}
\end{array} \right.
\end{equation}
A direct calculation shows that  $q_{tx}(t,x)=u_x(t,q(t,x))q_x(t,x)$. Hence, for $t>0,x\in\mathbb{R}$, we have
$$q_x(t,x)=\me^{\int^t_0u_x(\tau,q(\tau,x))\ d\tau}>0,$$
which implies that $q(t,\cdot):\mathbb{R}\rightarrow\mathbb{R}$ is a diffeomorphism of the line for
every $t\in[0,T)$. This is inferred
that the $L^{\infty}$-norm of any function $v(t,\cdot)\in L^{\infty}, \;  t\in[0,T)$ is preserved under
the family of diffeomorphisms
$q(t,\cdot)$ with $t\in[0,T)$, that is,
$$\|v(t,\cdot)\|_{L^{\infty}}=\|v(t,q(t,\cdot))\|_{L^{\infty}},\quad t\in[0,T).
$$
Consider the  $\mu$-version (\ref{e1.4})
with  $m=Au$, $ A = \mu - \partial_x^2. $ It is easy to verify that
 at each point of the circle the solution $u(t,x)$
satisfies a local conservation law
$$m(t,q(t,x))(\partial_xq(t,x))^\lambda=m_0(x)=\mu(u_0(x))-u''_0(x),
$$where $ u''_0(x) $ is the second derivative of $ u_0(x) $ with respective to $ x. $

Unlike the $ \mu$CH equation,  $ \displaystyle \|u(t)\|_{L^{\infty}} $  of the solution $ u(t,x) $ of  the
$\mu$DP equation is not uniformly bounded for $ t.$  However,  we are able to establish an important estimate in the following.
\begin{lemma}\label{l2.7}
Assume $u_0\in H^s,s>3/2$. Let $T$ be the maximal existence time of the solution $u(t,x)$ to the
initial value problem (\ref{e1.2}) associated with the $\mu$DP equation.  Then we have
$$\|u(t,x)\|_{L^{\infty}}\le\left (\frac{3}{2}\mu_0^2+6|\mu_0|\mu_2\right )t+\|u_0\|_{L^{\infty}},\quad \forall t\in[0,T].$$
\end{lemma}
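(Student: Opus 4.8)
The plan is to estimate $\|u(t,\cdot)\|_{L^\infty}$ by following the flow $q(t,x)$ from \eqref{e1.8} and controlling the growth of $u$ along characteristics. Since the $L^\infty$-norm is preserved under the diffeomorphism family $q(t,\cdot)$, it suffices to bound $|u(t,q(t,x))|$ uniformly in $x$. First I would differentiate along the flow: using the equation in the form \eqref{e1.4} with $\lambda=3$, namely $u_t + uu_x = -A^{-1}\partial_x(3\mu_0 u)$ (the $\frac{3-\lambda}{2}u_x^2$ term vanishes for $\mu$DP), one gets
\begin{equation*}
\frac{d}{dt}\,u(t,q(t,x)) = u_t(t,q(t,x)) + u_x(t,q(t,x))\,q_t(t,x) = -3\mu_0\big(A^{-1}\partial_x u\big)(t,q(t,x)).
\end{equation*}
So the task reduces to bounding $\|A^{-1}\partial_x u(t,\cdot)\|_{L^\infty}$ in terms of the conserved quantities $\mu_0$ and $\mu_2$.

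Next I would use the explicit representation \eqref{e1.6} for $A^{-1}\partial_x$. Writing
\begin{equation*}
A^{-1}\partial_x u(x) = \Big(x-\tfrac12\Big)\int_0^1 u(y)\,dy - \int_0^x u(y)\,dy + \int_0^1\!\!\int_0^x u(y)\,dy\,dx,
\end{equation*}
each term is an integral of $u$ over a subinterval of $[0,1]$, so by Cauchy--Schwarz every term is bounded by $\|u(t,\cdot)\|_{L^1} \le \|u(t,\cdot)\|_{L^2} = \mu_2$, the last equality by \eqref{2.3}. The coefficient $|x-\frac12|\le \frac12$ on $\mathbb{S}$, so collecting the three contributions yields a bound of the shape $\|A^{-1}\partial_x u(t,\cdot)\|_{L^\infty} \le C\mu_2$ with a small explicit constant; one checks the constants combine to give $\big|3\mu_0 (A^{-1}\partial_x u)\big| \le \big(\tfrac32\mu_0^2 + 6|\mu_0|\mu_2\big)$ — more precisely, one should be slightly careful and note that $\mu_0$ itself contributes through $\int_0^1 u\,dy = \mu_0$ in the first term, which is where the $\mu_0^2$ piece comes from, while the remaining pieces are controlled by $\mu_2$. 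Tracking these constants to match the stated coefficients $\frac32\mu_0^2 + 6|\mu_0|\mu_2$ is the one genuinely fiddly point.

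Finally, integrating the differential inequality
\begin{equation*}
\Big|\frac{d}{dt}\,u(t,q(t,x))\Big| \le \frac32\mu_0^2 + 6|\mu_0|\mu_2
\end{equation*}
from $0$ to $t$ gives $|u(t,q(t,x))| \le \big(\tfrac32\mu_0^2+6|\mu_0|\mu_2\big)t + |u_0(x)|$ for each $x$, and taking the supremum over $x$ together with the invariance $\|u(t,\cdot)\|_{L^\infty} = \|u(t,q(t,\cdot))\|_{L^\infty}$ and $\sup_x |u_0(x)| = \|u_0\|_{L^\infty}$ yields the claim. The main obstacle, such as it is, is not conceptual but bookkeeping: getting the precise numerical constants in the bound on $\|A^{-1}\partial_x u\|_{L^\infty}$ to come out as exactly $\frac32\mu_0^2 + 6|\mu_0|\mu_2$ rather than some other admissible constant, which requires splitting $A^{-1}\partial_x u$ carefully into the $\mu_0$-dependent mean part and the mean-zero remainder, applying Lemma \ref{l2.2} or a direct Cauchy--Schwarz estimate to the latter, and summing.
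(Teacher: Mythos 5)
Your proposal is correct and follows essentially the same route as the paper: rewrite the equation as $u_t+uu_x=-3\mu_0 A^{-1}\partial_x u$, bound $\|A^{-1}\partial_x u\|_{L^\infty}$ via the explicit formula (\ref{e1.6}) by $\tfrac12|\mu_0|+2\mu_2$ (which after multiplying by $3|\mu_0|$ gives exactly $\tfrac32\mu_0^2+6|\mu_0|\mu_2$, so the constant bookkeeping you flag is immediate), then integrate along the characteristics $q(t,x)$ and use that $q(t,\cdot)$ is a diffeomorphism to pass to the $L^\infty$-norm. The only cosmetic difference is that the paper first reduces to $s=3$ by a density argument before running the same computation.
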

\begin{proof}
Since the existence time $T$ is independent of the choice of $s$ by
Proposition \ref{p2.1}, applying a simple density argument, we only
need to consider the case $s=3$. Let $T$ be the maximal existence
time of the solution $u(t,x)$ to the initial value problem
(\ref{e1.2}) with the initial data $u_0\in H^3(\mathbb{S})$. By
(\ref{e1.4}) with $ \lambda = 3,$  the first equation of the initial
value problem (\ref{e1.2}) is equivalent to the following equation.
$$u_t+uu_x=-3\mu_0A^{-1}\partial_xu.$$
In view of (\ref{e1.6}), we have
$$|A^{-1}\partial_xu|\le\frac1{2}|\mu_0|+2\mu_2.$$
On the other hand, it follows from (\ref{e1.8}) that
$$\dfrac{d u(t,q(t,x))}{d t}=u_t(t,q(t,x))+u_x(t,q(t,x))\dfrac{d q(t,x)}{d t}=(u_t+uu_x)(t,q(t,x)).$$
Combining the above two estimates yields
$$-\left (\frac{3}{2}\mu_0^2+6|\mu_0|\mu_2\right )\le\dfrac{d u(t,q(t,x))}{d t}\le\frac{3}{2}\mu_0^2+6|\mu_0|\mu_2.$$
Integrating the above inequality with respect to $t<T$ on $[0,t],$ one easily finds
$$-\left (\frac{3}{2}\mu_0^2+6|\mu_0|\mu_2\right )t+u_0(x)\le u(t,q(t,x))\le\left (\frac{3}{2}\mu_0^2+6|\mu_0|\mu_2\right )t+u_0(x).$$
This thus implies  that
$$|u(t,q(t,x))|\le\|u(t,q(t, \cdot))\|_{L^{\infty}}\le\left (\frac{3}{2}\mu_0^2+6|\mu_0|\mu_2\right )t+\|u_0(x)\|_{L^{\infty}}.$$
In view of the diffeomorphism property of $q(t,\cdot),$  we obtain
$$\|u(t, \cdot)\|_{L^{\infty}}=\|u(t,q(t, \cdot))\|_{L^{\infty}}\le\left (\frac{3}{2}\mu_0^2+6|\mu_0|\mu_2\right )t
+\|u_0(x)\|_{L^{\infty}}.$$
This completes the proof of Lemma \ref{l2.7}.
\end{proof}

\section{Blow-up solutions}

In this section, we establish the precise blow-up scenarios and give
sufficient conditions for blow-up of solutions to the initial value
problem (\ref{e1.1}) and (\ref{e1.2}). Indeed, we determine the
precise blow-up scenarios for the problem (\ref{e1.4}) in the
following.

\begin{theorem}\label{t4.1}
Suppose that  $ \lambda \in \mathbb{R}. $ Let $u_0\in H^s(\mathbb{S}), s>3/2$ be given and assume that $T$ is the maximal existence
time of the corresponding solution $u(t,x)$ to the initial value problem (\ref{e1.4}) with the initial data
$u_0$. If there exists $M>0$ such that
$$\|u_x (t) \|_{L^{\infty}}\le M,\quad t\in[0,T),$$
 then the $H^s$-norm of $u(t,\cdot)$ does not blow up on $[0,T)$.
\end{theorem}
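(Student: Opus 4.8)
The plan is to establish an a priori $H^s$-estimate via energy methods and close it with Gronwall's inequality, using the assumed bound $\|u_x(t)\|_{L^\infty}\le M$. First I would rewrite the equation in the nonlocal form \eqref{e1.4}, namely $u_t + uu_x = -A^{-1}\partial_x\bigl(\lambda\mu_0 u + \tfrac{3-\lambda}{2}u_x^2\bigr) =: F(u)$, and observe that since $A^{-1}\partial_x = (\mu-\partial_x^2)^{-1}\partial_x$ is a bounded operator from $H^{s-1}$ to $H^s$ (indeed from $L^2$ to $H^1$ it smooths by one derivative, and commutes with $\Lambda = (1-\partial_x^2)^{1/2}$), the term $F(u)$ is no worse than the transport term; the genuine difficulty sits in the quasilinear term $uu_x$.

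Next I would apply $\Lambda^s$ to the equation, pair with $\Lambda^s u$ in $L^2$, and estimate $\frac{d}{dt}\|u\|_{H^s}^2$. The transport term contributes $\int \Lambda^s(uu_x)\,\Lambda^s u\,dx$, which I split as $\int [\Lambda^s, u]u_x\,\Lambda^s u\,dx + \int u\,\Lambda^s u_x\,\Lambda^s u\,dx$. The commutator is controlled by Lemma \ref{l2.6} with $r = s$: it is bounded by $c(\|\partial_x u\|_{L^\infty}\|\Lambda^{s-1}u_x\|_{L^2} + \|\Lambda^s u\|_{L^2}\|u_x\|_{L^\infty})\|\Lambda^s u\|_{L^2} \le c\,\|u_x\|_{L^\infty}\|u\|_{H^s}^2$, which by hypothesis is $\le cM\|u\|_{H^s}^2$. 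The second piece is handled by integration by parts: $\int u\,\Lambda^s u_x\,\Lambda^s u\,dx = \tfrac12\int u\,\partial_x\bigl((\Lambda^s u)^2\bigr)dx = -\tfrac12\int u_x (\Lambda^s u)^2\,dx$, again bounded by $\tfrac12\|u_x\|_{L^\infty}\|u\|_{H^s}^2 \le \tfrac12 M\|u\|_{H^s}^2$. For the nonlocal term, $\|\Lambda^s A^{-1}\partial_x(\lambda\mu_0 u)\|_{L^2} \le c|\mu_0|\|u\|_{H^{s-1}} \le c|\mu_0|\|u\|_{H^s}$, and $\|\Lambda^s A^{-1}\partial_x(u_x^2)\|_{L^2} \le c\|u_x^2\|_{H^{s-1}} \le c\|u_x\|_{L^\infty}\|u_x\|_{H^{s-1}} \le cM\|u\|_{H^s}$ using Lemma \ref{l2.5}; pairing with $\Lambda^s u$ gives a bound $\le c(M + |\mu_0|)\|u\|_{H^s}^2$.

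Combining these bounds yields $\frac{d}{dt}\|u(t)\|_{H^s}^2 \le C\bigl(M,|\mu_0|,\lambda,s\bigr)\,\|u(t)\|_{H^s}^2$ for $t\in[0,T)$, and Gronwall's inequality gives $\|u(t)\|_{H^s} \le \|u_0\|_{H^s}\,e^{Ct/2}$, which stays finite as $t\uparrow T$; hence the $H^s$-norm does not blow up on $[0,T)$. As usual one should first run this argument on smooth solutions (e.g. $s$ large, or via a mollified approximating system) to justify the manipulations, then pass to the limit by density and the continuous dependence in Proposition \ref{p2.1}, together with the remark that $T$ is independent of $s$. The main obstacle is the technical bookkeeping of the commutator estimate for the quasilinear term $uu_x$ — making sure Lemma \ref{l2.6} applies at the right regularity and that all terms are genuinely controlled by $\|u_x\|_{L^\infty}$ times $\|u\|_{H^s}^2$ (which is exactly what lets the hypothesis $\|u_x\|_{L^\infty}\le M$ do its job); everything else is a routine continuity/bootstrap wrap-up.
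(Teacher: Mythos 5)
Your proposal is correct and follows essentially the same route as the paper: apply $\Lambda^s$, pair with $\Lambda^s u$, handle $uu_x$ via the Kato--Ponce commutator estimate (Lemma \ref{l2.6}) plus integration by parts, control the nonlocal term using the one-derivative smoothing of $A^{-1}\partial_x$ together with the algebra property (Lemma \ref{l2.5}), and close with Gronwall. The only cosmetic difference is that the paper justifies the smoothing bound $\|A^{-1}\partial_x w\|_{H^s}\le c\|w\|_{H^{s-1}}$ through the explicit formulas (\ref{e1.6})--(\ref{e1.7}) rather than by an abstract multiplier argument.
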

\begin{proof}
We always assume that $c$ is a generic positive constant. Let $\Lambda=(1-\partial^2_x)^{1/2}$.
Applying the operator $\Lambda^s$ to the first equation in (\ref{e1.4}), then
multiplying by $\Lambda^s u$ and integrating over $\mathbb{S}$ with respect to $x$ lead to
\begin{equation*}
\frac{d}{d t}\|u\|^2_{H^s}=-2(uu_x,u)_{H^s}-2\left (u,A^{-1}\partial_x \left (\lambda\mu_0u+\frac{3-\lambda}{2}u^2_x \right )\right )_{H^s}.
\end{equation*}
Let us estimate the right hand side of the above equation.
\begin{align*}
|(uu_x,u)_{H^s}|&=|(\Lambda^s(uu_x),\Lambda^s
u)_{L^2}| =|([\Lambda^s,u]u_x,\Lambda^s
u)_{L^2}+(u\Lambda^s
u_x,\Lambda^s u)_{L^2}|\\
&\le\|[\Lambda^s,u]u_x\|_{L^2}\|\Lambda^s
u\|_{L^2}+\frac1{2}|(u_x\Lambda^s u,\Lambda^s u)_{L^2}|\\
 &\le c\|u_x\|_{L^{\infty}}\|u\|^2_{H^s}.
\end{align*}
In the above inequality, we used Lemma \ref{l2.6} with $r=s$.

By the expression (\ref{e1.6}) and (\ref{e1.7}), one finds that
\begin{align*}
\|A^{-1}\partial_xu\|_{H^s}&\le\|A^{-1}\partial_xu\|_{L^2}+\|\partial_xA^{-1}\partial_xu\|_{H^{s-1}}\\
&\le3\|u\|_{L^2}+\left\|-u+\int_{\mathbb{S}}u\ d x\right\|_{H^{s-1}}\\
&\le3\|u\|_{L^2}+2\|u\|_{H^{s-1}}\le5\|u\|_{H^{s-1}},
\end{align*}
and
\begin{align*}
\|A^{-1}\partial_xu_x^2\|_{H^s}&\le\|A^{-1}\partial_xu_x^2\|_{L^2}+\|\partial_xA^{-1}\partial_xu_x^2\|_{H^{s-1}}\\
&\le3\|u_x\|^2_{L^2}+\left\|-u_x^2+\int_{\mathbb{S}}u_x^2\ d x\right\|_{H^{s-1}}\\
&\le3\|u_x\|^2_{L^2}+2\|u_x^2\|_{H^{s-1}}\le5\|u_x\|_{L^{\infty}}\|u\|_{H^s},
\end{align*}
where in the last step we used Lemma \ref{l2.5} with $r=s-1$. It
then follows that
\begin{align*}
|(u,A^{-1}\partial_xu)_{H^s}|
\le
c\|u\|_{H^s}\|A^{-1}\partial_xu\|_{H^s}\le c\|u\|^2_{H^s}
\end{align*}and
\begin{align*}
|(u,A^{-1}\partial_xu^2_x)_{H^s}|
\le
c\|u\|_{H^s}\|A^{-1}\partial_xu^2_x\|_{H^s}\le c\|u_x\|_{L^{\infty}}\|u\|^2_{H^s}.
\end{align*}

Combining the above three estimates, we obtain
\begin{align*}
&\dfrac{d}{d t}\|u\|^2_{H^s}\le c(1+\|u_x\|_{L^{\infty}})\|u\|^2_{H^s}.
\end{align*}
An application of Gronwall's inequality and the assumption of the
theorem lead to
$$\|u\|^2_{H^s}\le\exp(c(1+M)t)\|u_0\|^2_{H^s}.$$
This completes the proof of Theorem \ref{t4.1}.
\end{proof}

\begin{theorem}\label{t4.2}
Let $u_0\in H^s({\mathbb S}), s > 3/2$, and $u(t,x)$ be the
solution of the initial value problem (\ref{e1.4}) with life-span
$T$. Then $T$ is finite if and only if
$$\underset{t\uparrow T}{\liminf}\left \{\underset{x\in \mathbb{S}}{\inf}
[(2\lambda -1)u_x(t,x)]\right \}=-\infty.$$
\end{theorem}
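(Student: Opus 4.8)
The plan is to prove the two implications separately, with the easy direction first. If $\limsup$ of the relevant infimum is finite, i.e.\ there exists $M>0$ with $(2\lambda-1)u_x(t,x)\ge -M$ for all $(t,x)$, then in particular $u_x$ is bounded below (up to the sign of $2\lambda-1$). The point is that a one-sided bound on $u_x$ upgrades to a two-sided bound: integrating $u_x$ over the circle gives $\int_{\mathbb S}u_x\,dx=0$, so at each time there is a point where $u_x=0$, and combined with the lower bound $u_x\ge -c$ (say) one controls $\|u_x(t)\|_{L^\infty}$ — more carefully, one uses the conservation of $\mu_1=\|u_x\|_{L^2}$ (for $\mu$CH) or works directly from the pointwise lower bound together with $\int u_x=0$ to bound $u_x$ from above on any interval between consecutive zeros. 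Once $\|u_x(t)\|_{L^\infty}\le M'$ on $[0,T)$, Theorem \ref{t4.1} gives that the $H^s$-norm stays bounded, so by the local well-posedness of Proposition \ref{p2.1} the solution extends past $T$, contradicting maximality; hence $T=\infty$, which is the contrapositive of ``$T$ finite $\Rightarrow \liminf = -\infty$''.

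For the converse, suppose $T<\infty$; I must show the $\liminf$ is $-\infty$. Arguing by contradiction, if it were not, there would be $M>0$ and a sequence $t_n\uparrow T$ along which $\inf_x[(2\lambda-1)u_x(t_n,x)]\ge -M$; I would want to promote this to a bound valid for \emph{all} $t$ near $T$, then invoke the easy direction's machinery. The clean way is to track $w(t):=\inf_{x}u_x(t,x)$ via Lemma \ref{l2.3}: differentiating the equation \eqref{e1.4} in $x$ and evaluating at the minimum point $\xi(t)$ (where $u_{xx}(t,\xi(t))=0$), one gets, using \eqref{e1.7} to rewrite $A^{-1}\partial_x^2$, a Riccati-type differential inequality of the form
\begin{equation*}
\frac{dw}{dt}\le -\frac{3-\lambda}{2}w^2 + (\text{lower order in }w),
\end{equation*}
respectively $\frac{dw}{dt}\ge \cdots$ depending on the sign; the coefficient $-\tfrac{3-\lambda}{2}$ paired with the factor $2\lambda-1$ is exactly what makes the quantity $(2\lambda-1)w$ the right object, since $(2\lambda-1)(3-\lambda)$ governs whether the quadratic term drives blow-up. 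One then shows: if $(2\lambda-1)w(t)$ stays bounded below on $[0,T)$, the ODE comparison forces $w(t)$ (hence $\|u_x(t)\|_{L^\infty}$, using $\int u_x=0$ and the relevant conservation law to bound the sup once the inf is controlled) to remain bounded on $[0,T)$, and again Theorem \ref{t4.1} plus Proposition \ref{p2.1} contradict $T<\infty$.

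The main obstacle I anticipate is the bookkeeping of signs and the lower-order terms in the Riccati inequality: the $\mu$-version introduces the nonlocal term $\lambda\mu_0 A^{-1}\partial_x u$, which by \eqref{e1.6} is bounded in $L^\infty$ in terms of $|\mu_0|$ and $\mu_1$ (for $\mu$CH) or $\mu_2$ (for $\mu$DP, cf.\ the estimate in Lemma \ref{l2.7}), but for $\mu$DP one must be careful because $\|u\|_{L^\infty}$ is only known to grow linearly (Lemma \ref{l2.7}) rather than being uniformly bounded — however, on the \emph{finite} interval $[0,T)$ this linear bound is still a bound, so the lower-order terms in the differential inequality for $w$ are bounded on $[0,T)$, which is all that is needed. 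The other delicate point is justifying that a one-sided control on $w=\inf u_x$ yields $\|u_x\|_{L^\infty}\le M'$: this is where the zero-average property $\int_{\mathbb S}u_x\,dx=0$ is essential, together with, for $\mu$CH, the conserved $L^2$-norm $\mu_1$ of $u_x$ (giving directly a sup bound via \eqref{e2.1}-type reasoning once the minimum is controlled), and for $\mu$DP a parallel argument using $\tilde H_1$; I would also note that the cases $\lambda=\tfrac12$ (where $2\lambda-1=0$ and the statement is vacuous) and $\lambda=3$ (where the quadratic term drops out, so one needs the linear term, matching the known $\mu$DP phenomenon) require separate short remarks, though the theorem as stated simply covers $\lambda\in\mathbb R$ with the factor $2\lambda-1$ absorbing these.
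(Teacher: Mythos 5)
The central gap is that your mechanism for converting the one-sided bound into the two-sided bound $\|u_x(t)\|_{L^\infty}\le M'$ (which is what Theorem \ref{t4.1} needs) does not work, and it is precisely here that the paper uses an idea absent from your proposal. The paper's proof multiplies the $\mu$-version of (\ref{f1.1}), $m_t+um_x+\lambda u_xm=0$ with $m=\mu(u)-u_{xx}$, by $m$ and integrates, obtaining the identity $\frac{d}{dt}\int_{\mathbb S}m^2\,dx=(1-2\lambda)\int_{\mathbb S}u_xm^2\,dx$; this is the actual source of the factor $2\lambda-1$. A lower bound $(2\lambda-1)u_x\ge -N$ then gives, via Gronwall, $\int_{\mathbb S}m^2\,dx\le e^{Nt}\int_{\mathbb S}m_0^2\,dx$, and since $\int_{\mathbb S}m^2\,dx=\mu_0^2+\|u_{xx}\|_{L^2}^2$, one controls $\|u_{xx}\|_{L^2}$ on $[0,T)$ and hence, by the zero-mean inequality \eqref{e2.1} applied to $u_x$, the full norm $\|u_x\|_{L^\infty}\le\frac{1}{2\sqrt3}\|u_{xx}\|_{L^2}$; only then does Theorem \ref{t4.1} apply. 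Your substitute — deducing an $L^\infty$ bound on $u_x$ from the lower bound together with $\int_{\mathbb S}u_x\,dx=0$ and the conserved $\|u_x\|_{L^2}$ — is simply false: a profile equal to a small negative constant outside a short interval and carrying a tall thin positive spike has zero mean, bounded-below values, and fixed $L^2$ norm, yet arbitrarily large supremum, so nothing in your argument excludes $\sup_xu_x(t,x)\to\infty$. (For the $\mu$DP case the situation is worse still, since $\tilde H_1$ only controls $\|u\|_{L^2}$, not $\|u_x\|_{L^2}$.) Without control of $\|u_{xx}\|_{L^2}$ (equivalently of $\|m\|_{L^2}$), the continuation step collapses.

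There are also structural problems. Your second paragraph proves the same implication as your first: ``$T<\infty\Rightarrow\liminf=-\infty$'' is the contrapositive of what you already claimed, so the genuinely remaining direction (``$\liminf=-\infty\Rightarrow T<\infty$'', which the paper dispatches via Proposition \ref{p2.1} and the Sobolev embedding $H^s\hookrightarrow C^1$) is never addressed. Moreover the Riccati tracking of $w(t)=\inf_xu_x(t,x)$ cannot serve your purpose: an inequality of the form $w'\le -cw^2+C$ drives $w$ downward and gives no lower bound, while the reverse inequality $w'\ge -cw^2-C$ only prevents $w$ from reaching $-\infty$ up to a time determined by $w(0)$ and $C$, not on an arbitrary interval $[0,T)$; and in any case bounding $\inf_xu_x$ says nothing about $\sup_xu_x$, so the same gap as above reappears. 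The heuristic that ``$(2\lambda-1)(3-\lambda)$ governs the quadratic term'' also misidentifies where $2\lambda-1$ comes from; note the theorem covers $\lambda=3$, where your quadratic coefficient $\frac{3-\lambda}{2}$ vanishes, whereas the paper's $m$-energy identity treats all $\lambda\in\mathbb{R}$ uniformly.
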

\begin{proof}
Since the existence time $T$ is independent of the choice of $s$
by Proposition \ref{p2.1}, applying a simple
density argument, we only need to consider the case $s=3$.
Multiplying the $\mu$-version of the family (\ref{f1.1}) by $m$ and
integrating over $\mathbb{S}$ with respect to $x$ yield
\begin{align*}
\dfrac{d}{d t}\int_{\mathbb{S}}m^2\;d
x=&-2\lambda\int_{\mathbb{S}}u_xm^2\;d x-2\int_{\mathbb{S}}umm_x\;d
x\\
=&(1-2\lambda)\int_{\mathbb{S}}u_xm^2\;d x.
\end{align*}
If $(2\lambda -1)u_x$ is bounded from below on $[0,T)\times {\mathbb
S}$, i.e., there exists $N>0$ such that $(2\lambda -1)u_x\geq -N$ on
$[0,T)\times {\mathbb S}$, then it is thereby inferred from the
above estimate that
\begin{eqnarray*}
\frac{d}{dt}\int_{\mathbb S}m^2\;dx\leq N\int_{\mathbb S}m^2\ dx.
\end{eqnarray*}
Applying Gronwall's inequality then yields for $t\in [0,T)$
\begin{eqnarray*}
\int_{\mathbb S}m^2\ dx\leq e^{Nt}\int_{\mathbb S}m_0^2\ dx.
\end{eqnarray*}
Note that $\mu(u)$ is independent of $t$ to the $\mu$-version of the
family (\ref{f1.1}) for any $\lambda\in {\mathbb R}$. Thus we have
\begin{eqnarray*}
\int_{\mathbb S}m^2(t,x)\ dx=\mu^2(u)+\int_{\mathbb
S}u_{xx}^2\;dx=\mu_0^2+||u_{xx}||_{L^2}^2.
\end{eqnarray*}
It then follows from Sobolev's embedding $H^1\hookrightarrow
L^{\infty}$ and Remark 2.1 that for $t\in [0,T)$
\begin{eqnarray*}
||u_x||_{L^{\infty}}\leq \frac {1}{2\sqrt{3}}||u_{xx}||_{L^2}\leq
\frac {1}{2\sqrt{3}}||m||_{L^2}\leq \frac {1}{2\sqrt{3}}e^{\frac 12
NT}||m_0||_{L^2}. \end{eqnarray*} As a result of Theorem \ref{t4.1},
we deduce that the solution exists globally in time.

On the other hand, if the slope of the solution becomes unbounded
from below, by the existence  of the local strong solution
Proposition \ref{p2.1} and Sobolev's embedding theorem, we infer
that the solution will blow-up in finite time. The proof of Theorem
\ref{t4.2} is then completed.
\end{proof}

\begin{remark} In particular, when $\lambda=1/2$, the solution $ u $  to  (\ref{e1.4})
exists in $ H^s(\mathbb{S}), s > 3/2 $ globally in time.
\end{remark}

In the following, by means of the blow-up scenarios we establish some sufficient conditions
guaranteeing the development of singularities.

\subsection {Blow-up for the $\mu$CH equation}

We are now in a position to give the first blow-up result for the $\mu$CH equation.
\begin{theorem}\label{t4.3}
Let $u_0\in H^s(\mathbb{S}), s > 3/2$ and $T>0$ be the maximal time of
existence of the corresponding solution $u(t,x)$ to (\ref{e1.1})
with the initial data $u_0$. If $\
(\sqrt{3}/\mpi)|\mu_0|<\mu_1,$ where $\mu_0 $ and $ \mu_1$ are
defined in (\ref{2.1}) and (\ref{2.2}), then the corresponding
solution $u(t,x)$ to (\ref{e1.1}) associated with the $\mu$CH
equation must blow up in finite time  $T$ with
$$0<T\le\inf\limits_{\alpha \in I}\left (\dfrac{6}{1-6\alpha|\mu_0|}+4\mpi^2\alpha\dfrac{1+|\int_{\mathbb{S}}u_{0x}^3(x)\;d x|}
{6\mpi^2\alpha\mu_1^4-3|\mu_0|\mu_1^2}\right ) $$
where $ I = \left (\frac{|\mu_0|}{2\mpi^2\mu_1^2},\frac1{6|\mu_0|} \right )$,
such that
$$\liminf\limits_{t\uparrow T}\left (\inf\limits_{x\in\mathbb{S}}u_x(t,x)\right )=-\infty.$$
\end{theorem}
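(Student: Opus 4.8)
The plan is to exploit the Lyapunov functional $V(t)=\int_{\mathbb{S}}u_x^3(t,x)\,dx$ introduced in \cite{con3}, together with the sharp Poincar\'e inequality of Lemma \ref{l2.2} and a Riccati-type blow-up mechanism. By Proposition \ref{p2.1} the life-span is independent of $s>3/2$, so by a density argument I may take $s=3$. First I would differentiate the transport form (\ref{e1.4}) with $\lambda=2$ in $x$ and substitute the identity (\ref{e1.7}), $A^{-1}\partial_x^2w=-w+\int_{\mathbb{S}}w\,dx$, together with $\int_{\mathbb{S}}u\,dx=\mu_0$ and $\int_{\mathbb{S}}u_x^2\,dx=\mu_1^2$, to obtain
$$u_{tx}=-uu_{xx}-\tfrac12 u_x^2+2\mu_0 u-2\mu_0^2-\tfrac12\mu_1^2 .$$
Multiplying by $3u_x^2$, integrating over $\mathbb{S}$, using $u_x^2u_{xx}=\tfrac13(u_x^3)_x$ and integration by parts to convert the convective term into $+\int_{\mathbb{S}}u_x^4\,dx$, and writing $u=\mu_0+v$ with $\int_{\mathbb{S}}v\,dx=0$ (so that the $\pm 6\mu_0^2\mu_1^2$ contributions cancel), I arrive at
$$V'(t)=-\tfrac12\int_{\mathbb{S}}u_x^4\,dx+6\mu_0\int_{\mathbb{S}}vv_x^2\,dx-\tfrac32\mu_1^4 .$$

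Next I would estimate the middle term by Cauchy--Schwarz, $\bigl|\int_{\mathbb{S}}vv_x^2\,dx\bigr|\le\|v\|_{L^2}\bigl(\int_{\mathbb{S}}u_x^4\,dx\bigr)^{1/2}$, and then by Lemma \ref{l2.2} with $b-a=1$ (using that $v$ has zero average), $\|v\|_{L^2}\le\frac1{2\pi}\mu_1$, which yields
$$V'(t)\le-\tfrac12\int_{\mathbb{S}}u_x^4\,dx+\tfrac{3|\mu_0|\mu_1}{\pi}\Bigl(\int_{\mathbb{S}}u_x^4\,dx\Bigr)^{1/2}-\tfrac32\mu_1^4 .$$
Now, for a parameter $\alpha>0$, split $-\tfrac12\int u_x^4=-\tfrac{1-6\alpha|\mu_0|}{2}\int u_x^4-3\alpha|\mu_0|\int u_x^4$; regarding $-3\alpha|\mu_0|s^2+\tfrac{3|\mu_0|\mu_1}{\pi}s-\tfrac32\mu_1^4$ as a downward quadratic in $s=(\int u_x^4)^{1/2}$, its maximum equals $-\rho(\alpha)$ where $\rho(\alpha):=\tfrac32\mu_1^4-\tfrac{3|\mu_0|\mu_1^2}{4\pi^2\alpha}$, and $\frac1{\rho(\alpha)}=\frac{4\pi^2\alpha}{6\pi^2\alpha\mu_1^4-3|\mu_0|\mu_1^2}$. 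One checks that $\rho(\alpha)>0$ iff $\alpha>\frac{|\mu_0|}{2\pi^2\mu_1^2}$ and $1-6\alpha|\mu_0|>0$ iff $\alpha<\frac1{6|\mu_0|}$, and these two conditions cut out exactly the nonempty interval $I$, nonemptiness being equivalent to the hypothesis $(\sqrt3/\pi)|\mu_0|<\mu_1$. Hence for every $\alpha\in I$,
$$V'(t)\le-\tfrac{1-6\alpha|\mu_0|}{2}\int_{\mathbb{S}}u_x^4\,dx-\rho(\alpha),\qquad 0\le t<T .$$

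Finally I would run a two-phase argument for a fixed $\alpha\in I$. Phase one: discarding the $u_x^4$-term gives $V'\le-\rho(\alpha)$, so $V$ decreases at rate at least $\rho(\alpha)$ and hits the level $-1$ at some time $t_0\le(1+|V(0)|)/\rho(\alpha)=4\pi^2\alpha\,\frac{1+|\int_{\mathbb{S}}u_{0x}^3\,dx|}{6\pi^2\alpha\mu_1^4-3|\mu_0|\mu_1^2}$ (take $t_0=0$ if $V(0)\le-1$), after which $V(t)\le-1$. Phase two: for $t\ge t_0$, H\"older's inequality on $\mathbb{S}$ gives $\int_{\mathbb{S}}u_x^4\,dx\ge(-V(t))^{4/3}$, so $Y:=-V$ satisfies $Y(t_0)\ge1$ and $Y'\ge\tfrac{1-6\alpha|\mu_0|}{2}Y^{4/3}$, whence $\frac{d}{dt}Y^{-1/3}\le-\tfrac{1-6\alpha|\mu_0|}{6}$ and $Y^{-1/3}(t)\le1-\tfrac{1-6\alpha|\mu_0|}{6}(t-t_0)$; since $Y^{-1/3}$ stays positive while the solution is smooth, this forces $T\le t_0+\frac{6}{1-6\alpha|\mu_0|}$. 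Adding the two phases and infimizing over $\alpha\in I$ gives the stated bound on $T$, and since $T$ is finite, Theorem \ref{t4.2} (with $2\lambda-1=3$) yields $\liminf_{t\uparrow T}\inf_{x\in\mathbb{S}}u_x(t,x)=-\infty$ — this also follows directly from $-V(t)\le|\inf_x u_x(t,\cdot)|\,\mu_1^2$ together with $\inf_x u_x\le0$.

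The step I expect to require the most care is the algebraic bookkeeping in the quadratic splitting: one must choose the decomposition $-\tfrac12=-\tfrac{1-6\alpha|\mu_0|}{2}-3\alpha|\mu_0|$ and evaluate the residual quadratic so that the two quantities $1-6\alpha|\mu_0|$ and $6\pi^2\alpha\mu_1^4-3|\mu_0|\mu_1^2$ appear exactly as in the statement. It is also worth noting that the constant $\pi$ in the hypothesis comes precisely from the sharp Poincar\'e constant $\frac1{2\pi}$ in Lemma \ref{l2.2}; the cruder pointwise bound $\|v\|_{L^{\infty}}\le\frac{\sqrt3}{6}\mu_1$ would produce a strictly weaker smallness condition.
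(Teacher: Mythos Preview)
Your proof is correct and follows essentially the same route as the paper's: derive the identity for $V'(t)$, estimate the cross term $6\mu_0\int_{\mathbb S}(u-\mu_0)u_x^2\,dx$ via Cauchy--Schwarz and the Poincar\'e inequality of Lemma~\ref{l2.2}, introduce a parameter $\alpha$ to obtain $V'\le -c_1\int u_x^4-c_2$ with $c_1=\tfrac12-3\alpha|\mu_0|$ and $c_2=\tfrac32\mu_1^4-\tfrac{3|\mu_0|\mu_1^2}{4\pi^2\alpha}$, and then run the two-phase Riccati argument. The only cosmetic differences are that the paper applies Young's inequality $ab\le\frac{\alpha}{2}a^2+\frac{1}{2\alpha}b^2$ directly where you instead maximize the residual quadratic in $s=(\int u_x^4)^{1/2}$ (these give identical constants), and that you handle the superlinear differential inequality more carefully by passing to $Y=-V$ rather than writing $V^{4/3}$ for negative $V$; the paper also explicitly separates off the case $\mu_0=0$ by citing \cite{khe}, whereas your argument covers it as a degenerate limit.
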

\begin{proof}
As discussed above, it suffices to consider the case $s=3$. Since the case $\mu_0=0$ was proved in \cite{khe},
we only need to discuss  the case $\mu_0\neq0$. In this case, $ \mu_1 > 0.$
Differentiating the $\mu$CH equation with respect to $x$ yields
$$u_{tx}+u^2_x+uu_{xx}+A^{-1}\partial^2_x \left (2u\mu_0+\frac1{2}u^2_x \right )= 0,$$
In view of (\ref{2.1}), (\ref{2.2}) and (\ref{e1.7}), we have
\begin{align}\label{e4.0}
u_{tx}=-\frac1{2}u^2_x-uu_{xx}+2u\mu_0-2\mu_0^2-\frac1{2}\mu_1^2.
\end{align}
Multiplying (\ref {e4.0}) by $3u^2_x$ and integrating on $\mathbb{S}$ with respect to $x$, we obtain for any $ t \in [0, T) $ that
 \begin{align}\label{e3.1}
 &\dfrac{d}{d t}\int_{\mathbb{S}}u_x^3\;d x=\int_{\mathbb{S}}3u_x^2u_{xt}\;d x\nonumber\\
 &\quad=-\frac{3}{2}\int_{\mathbb{S}}u_x^4\;d x-\int_{\mathbb{S}}3uu_x^2u_{xx}\;d x+6\mu_0\int_{\mathbb{S}}uu_x^2\;d x-
 6\mu^2_0\int_{\mathbb{S}}u_x^2\;d x-\frac{3}{2}\left (\int_{\mathbb{S}}u_x^2\;d x\right )^2\nonumber\\
 &\quad=-\frac1{2}\int_{\mathbb{S}}u_x^4\;d x-\frac{3}{2}\mu_1^4+6\mu_0\int_{\mathbb{S}}(u-\mu_0)u_x^2\;d x.
 \end{align}
 On the other hand, it follows from Lemma \ref{l2.2} for any $ \alpha > 0 $ that
 \begin{align*}
 \int_{\mathbb{S}}(u-\mu_0)u_x^2\;d x&\le\left (\int_{\mathbb{S}}(u-\mu_0)^2\;d x\right )^{\frac1{2}}
 \left (\int_{\mathbb{S}}u_x^4\;d x\right )^{\frac1{2}}\\
 &\le\frac{\alpha}{2}\int_{\mathbb{S}}u_x^4\;d x+\frac1{2\alpha}\int_{\mathbb{S}}(u-\mu_0)^2\;d x\\
 &\le\frac{\alpha}{2}\int_{\mathbb{S}}u_x^4\;d x+\frac1{8\mpi^2\alpha}\int_{\mathbb{S}}u_x^2\;d x.
 \end{align*}
 Therefore we deduce that
 $$\dfrac{d}{d t}\int_{\mathbb{S}}u_x^3\;d x\le \left (3\alpha|\mu_0|-\frac1{2}\right )\int_{\mathbb{S}}u_x^4
 \;d x-\frac{3}{2}\mu_1^4+\frac{3}{4\mpi^2\alpha}|\mu_0|\mu_1^2.$$
 By the assumption of the theorem, we know that
 $ {|\mu_0|}/({2\mpi^2\mu_1^2})<1/({6|\mu_0|}). $ Let   $\alpha>0$ satisfy
 $$\frac{|\mu_0|}{2\mpi^2\mu_1^2}<\alpha<\frac1{6|\mu_0|}.$$
This in turn implies that $ \displaystyle 6\alpha|\mu_0|-1<0 $ and $
\displaystyle 2\pi^2\alpha\mu_1^2-|\mu_0|>0.$ Define $ c_1 $ and $
c_2 $ by
 $$c_1=\frac1{2}-3\alpha|\mu_0|>0,\quad c_2=\frac{3}{2}\mu_1^4-\frac{3}{4\mpi^2\alpha}|\mu_0|\mu_1^2>0.$$
It is then clear that
$$
\dfrac{d}{d t}\int_{\mathbb{S}}u_x^3\;d x\le -c_1\int_{\mathbb{S}}u_x^4\;d x-c_2\le
 -c_1\left (\int_{\mathbb{S}}u_x^3\;d x\right )^{\frac{4}{3}}-c_2.
 $$
 Let  $V(t)=\int_{\mathbb{S}}u_x^3(t, x)\ d x$ with $ t \in [0, T).$ Then the above inequality can be rewritten as
 $$\dfrac{d}{d t}V(t)\le-c_1(V(t))^{\frac{4}{3}}-c_2\le-c_2<0,\quad t\in[0,T).$$
This implies that $V(t)$ decreases strictly in $[0, T). $  Let $t_1=(1+|V(0)|)/c_2.$ One can assume $ t_1 < T. $ Otherwise, $ T \le t_1 < \infty $ and the theorem is proved. Now integrating the above inequality over $[0,t_1]$ yields
 $$V(t_1)=V(0)+\int^{t_1}_0\dfrac{d}{d t}V(t) \ d t \le|V(0)|-c_2t_1\le-1.$$
 It is also found that
 $$\dfrac{d}{d t}V(t)\le-c_1(V(t))^{\frac{4}{3}},\qquad t\in[t_1,T),$$
 which leads to
 $$-3\dfrac{d}{d t}\left (\dfrac1{(V(t))^{\frac1{3}}}\right )=(V(t))^{-\frac{4}{3}}\dfrac{d}{d t}V(t)\le-c_1,\qquad t\in[t_1,T).$$
 Integrating both sides of the above inequality and applying $V(t_1)\le-1$ yield
 $$-\dfrac1{(V(t))^{\frac1{3}}}-1\le-\dfrac1{(V(t))^{\frac1{3}}}+\dfrac1{(V(t_1))^{\frac1{3}}}\le-\frac{c_1}{3}(t-t_1),\qquad t\in[t_1,T).$$
Recall that $V(t)\le V(t_1)\le-1$ in $[t_1,T)$. It follows that
 $$V(t)\le\left [\dfrac{3}{c_1(t-t_1)-3}\right ]^3\rightarrow-\infty, \quad\text{as}\quad t\rightarrow t_1+\frac{3}{c_1}.
 $$
On the other hand, we have
 $$\int_{\mathbb{S}}u_x^3\;d x\ge\inf\limits_{x\in\mathbb{S}}u_x(t,x)\int_{\mathbb{S}}u_x^2\;d x=\mu_1^2\inf\limits_{x\in\mathbb{S}}u_x(t,x).$$
 This then implies that $0<T \le t_1+3/c_1$ such that
 $$\liminf\limits_{t\uparrow T}\left (\inf\limits_{x\in\mathbb{S}}u_x(t,x)\right )=-\infty.$$
 This  completes the proof of Theorem \ref{t4.3}.
\end{proof}

\begin {remark}  Note that in \cite{khe}, the initial condition of blow-up mechanism is $4|\mu_0|\le\mu_1$. Therefore, Theorem \ref{t4.3}  improves the blow-up result in \cite{khe}.
\end{remark}
In the case $ (\sqrt{3}/\mpi)|\mu_0|\ge\mu_1$, we have the
following blow-up result.
\begin{theorem} \label{t4.5}
Let $u_0 \in H^s(\mathbb{S}),  s > 3/2$ and $T>0$ be the maximal time of
existence of the corresponding solution $u(t,x)$ to (\ref{e1.1})
with the initial data $u_0$. If  $ (\sqrt{3}/\mpi)|\mu_0|\ge\mu_1$
and
$$ \inf_{x \in \mathbb{S}}  u'_0(x) <-\sqrt{2\mu_1 \left (\frac{\sqrt{3}}{3}|\mu_0|-\frac1{2}\mu_1 \right )}:\equiv-K,$$ where $ u'_0(x) $ is the derivative of $ u_0(x) $ with respective to $ x, $
then the corresponding
solution $u(t,x)$ to (\ref{e1.1}) blows up in finite time  $T$ with
$$0<T\le \left(-\dfrac{2}{\inf_{x \in\mathbb{S}} u'_0(x )+\sqrt{-K \inf_{x \in \mathbb{S}} u'_0(x)}}\right),$$
such that
$$\liminf\limits_{t\uparrow T}\left (\inf\limits_{x\in\mathbb{S}}u_x(t,x)\right )=-\infty.$$
\end{theorem}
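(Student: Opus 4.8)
The plan is to follow the slope of the solution along a single characteristic and reduce everything to a Riccati differential inequality, much as in the proof of Theorem~\ref{t4.3} but without integrating $u_x^3$ over $\mathbb{S}$. As before, by density it suffices to take $s=3$, and since $\mu_0=0$ gives the Hunter--Saxton equation (already treated) we may assume $\mu_0\neq0$. Under the hypotheses one has $\mu_1>0$ (otherwise $u_x\equiv0$, contradicting $\inf u_0'<-K\le0$) and $\tfrac{\sqrt3}{3}|\mu_0|-\tfrac12\mu_1>0$ (a consequence of $(\sqrt3/\mpi)|\mu_0|\ge\mu_1$), so that $K>0$ and $m_0:=\inf_{x\in\mathbb{S}}u_0'(x)<-K<0$. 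Fix $x_0\in\mathbb{S}$ with $u_0'(x_0)=m_0$, let $q(t):=q(t,x_0)$ be the flow from (\ref{e1.8}), and put $M(t):=u_x(t,q(t))$. Since $u\in C([0,T);H^3)\cap C^1([0,T);H^2)$, the function $M$ is $C^1$ with $M'(t)=u_{tx}(t,q(t))+u(t,q(t))\,u_{xx}(t,q(t))$.

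Inserting the identity (\ref{e4.0}), $u_{tx}=-\tfrac12u_x^2-uu_{xx}+2\mu_0u-2\mu_0^2-\tfrac12\mu_1^2$, the term $uu_{xx}$ cancels along the characteristic and leaves
$$M'(t)=-\tfrac12M^2(t)+2\mu_0\big(u(t,q(t))-\mu_0\big)-\tfrac12\mu_1^2.$$
The uniform amplitude bound $\|u(t,\cdot)-\mu_0\|_{L^{\infty}}\le\tfrac{\sqrt3}{6}\mu_1$ established in the Preliminaries gives $2\mu_0(u(t,q(t))-\mu_0)\le\tfrac{\sqrt3}{3}|\mu_0|\mu_1$, whence
$$M'(t)\le-\tfrac12M^2(t)+\tfrac{\sqrt3}{3}|\mu_0|\mu_1-\tfrac12\mu_1^2=-\tfrac12\big(M^2(t)-K^2\big),\qquad M(0)=m_0<-K.$$
A short continuity argument then yields $M(t)\le m_0<-K$ for every $t\in[0,T)$: while $M(t)<-K$ one has $M'(t)\le-\tfrac12(M^2(t)-K^2)<0$, so $M$ is strictly decreasing and can never climb back to the level $m_0$.

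To pass to finite-time blow-up with the stated rate, I would use the elementary estimate, valid for $|M|\ge|m_0|\ge K$,
$$M^2-K^2\ \ge\ \Big(1-\sqrt{\tfrac{K}{|m_0|}}\,\Big)M^2,$$
which holds because $\sqrt{K/|m_0|}\,M^2\ge\sqrt{K/|m_0|}\,m_0^2=\sqrt{K|m_0|^3}\ge K^2$. Writing $|m_0|=-m_0$ and $\sqrt{K/|m_0|}=\sqrt{-Km_0}/|m_0|$, this turns the Riccati inequality into $M'(t)\le-\gamma M^2(t)$ with $\gamma:=\tfrac12\,\tfrac{m_0+\sqrt{-Km_0}}{m_0}>0$ (here $m_0+\sqrt{-Km_0}<0$, since $m_0<-K$ forces $-Km_0<m_0^2$). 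Dividing by $M^2$ and integrating gives $\tfrac1{M(t)}\ge\tfrac1{m_0}+\gamma t$; as $M(t)<0$ the right-hand side must stay negative, so
$$T\le-\frac1{m_0\gamma}=\frac{-2}{m_0+\sqrt{-Km_0}}=\frac{-2}{\inf_{x\in\mathbb{S}}u_0'(x)+\sqrt{-K\inf_{x\in\mathbb{S}}u_0'(x)}}\,.$$
In particular $T<\infty$; by Theorem~\ref{t4.2} this forces $\liminf_{t\uparrow T}\inf_{x\in\mathbb{S}}u_x(t,x)=-\infty$, and in fact $M(t)=u_x(t,q(t))\to-\infty$ as $t\uparrow T$.

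The analytic content — the differentiated equation (\ref{e4.0}) and the $L^\infty$ amplitude bound — is already in hand, so the remaining work is essentially bookkeeping. The one delicate choice is the deliberately lossy inequality $M^2-K^2\ge(1-\sqrt{K/|m_0|})M^2$: solving the comparison Riccati equation exactly would yield a sharper (logarithmic) bound on $T$, but it is precisely this estimate that reproduces the blow-up time asserted in the theorem, and one has to keep the sign conditions ($K>0$, $m_0<-K<0$, $\gamma>0$, $m_0+\sqrt{-Km_0}<0$) straight throughout.
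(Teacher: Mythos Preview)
Your proof is correct and follows essentially the same route as the paper: track $u_x$ along the characteristic through a point where $u_0'$ attains its infimum, use \eqref{e4.0} together with the amplitude bound $\|u-\mu_0\|_{L^\infty}\le\tfrac{\sqrt3}{6}\mu_1$ to obtain the Riccati inequality $M'\le-\tfrac12(M^2-K^2)$, and then weaken this to $M'\le-\delta M^2$ with $\delta=\tfrac12(1-\sqrt{K/|m_0|})$ to read off the stated blow-up time. The only cosmetic difference is the form of the lossy step---the paper writes it as $K^2<(1-2\delta)^4w^2(t)$ with $1-2\delta=\sqrt{K/|m_0|}$, while you write $M^2-K^2\ge(1-\sqrt{K/|m_0|})M^2$---but both yield the identical constant $\gamma=\delta$ and hence the same bound on $T$.
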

\begin{proof}As discussed above, it suffices to consider the case $s=3$. Note that the assumption $\  (\sqrt{3}/\mpi)|\mu_0|\ge\mu_1$ implies that $\  (2/\sqrt{3})|\mu_0| > \mu_1$. Therefore the non-negative constant $K$ is well-defined.
In view of  (\ref{e4.0}), we have
\begin{align*}
u_{tx}+uu_{xx}=-\frac1{2}u^2_x+2u\mu_0-2\mu_0^2-\frac1{2}\mu^2_1.
\end{align*}
By Lemma \ref{l2.3}, there is $ x_0 \in \mathbb{S} $ such that $ \displaystyle u_0'(x_0) = \inf_{x \in \mathbb{S}} u_0'(x). $
Define $w(t)=u_x(t,q(t,x_0))$, where $q(t,x_0)$ is the flow of $u(t,q(t,x_0))$. Then
$$\dfrac{d}{d t}w(t)=(u_{tx}+u_{xx}q_t)(t,q(t,x_0))=(u_{tx}+uu_{xx})(t,q(t,x_0)).$$
Substituting $(t,q(t,x_0))$ into the above equation and using (\ref{e2.1}), we obtain
\begin{align*}
\dfrac{d}{d t}w(t)&=-\frac1{2}w^2(t)+2u\mu_0(t,q(t,x_0))-2\mu_0^2-\frac1{2}\mu^2_1\\
&\le-\frac1{2}w^2(t)+2\mu_0|u(t,q(t,x_0))-\mu_0|-\frac1{2}\mu^2_1\\
&\le-\frac1{2}w^2(t)+\mu_1\left (\frac{\sqrt{3}}{3}|\mu_0|-\frac1{2}\mu_1\right )\\
&=-\frac1{2}w^2(t)+\frac1{2}K^2.
\end{align*}
By the assumption $w(0)<-K$, it follows that  $w'(0)<0$ and $w(t)$
is  strictly decreasing  on $[0,T)$. Set
$$\delta=\frac1{2}-\frac1{2}\sqrt{\frac{K}{-u'_0(x_0)}}\in \left (0,\frac1{2} \right ).$$
And so
$$(u'_0(x_0))^2=\frac{K^2}{(1-2\delta)^4}<w^2(t),$$
which is to say
$$K^2<(1-2\delta)^4w^2(t).$$
Therefore
$$\dfrac{d}{d t}w(t)\le-\frac1{2}w^2(t)[1-(1-2\delta)^4]=-\delta w^2(t), \quad t\in[0,T),$$
which leads to
$$-\dfrac{d}{d t}\frac1{w(t)}=\frac1{w^2(t)}\dfrac{d}{d t}w(t)\le-\delta,\quad t\in[0,T).$$
Integrating both sides over $[0,t)$ yields
$$-\frac1{w(t)}+\frac1{u'_0(x_0)}\le-\delta t,\quad t\in[0,T).$$
So
$$w(t)\le\dfrac{u'_0(x_0)}{1+\delta t u'_0(x_0)}\rightarrow-\infty, \quad\text{as}\quad t\rightarrow -\frac1{\delta u'_0(x_0)}.$$
This implies
$$T\le-\frac1{\delta u'_0(x_0)}<+\infty.$$ In consequence, we have
$$\liminf\limits_{t\uparrow T}\left (\inf\limits_{x\in\mathbb{S}}u_x(t,x)\right )=-\infty.$$
This  completes the proof of Theorem \ref{t4.5}.
\end{proof}

\begin{remark} We can apply Lemma \ref{l2.3} to verify the above theorem under the same conditions.
In fact, if we define $w(t)=u_x(t,\xi(t))=\inf\limits_{x\in\mathbb{S}}[u_x(t,x)]$,
then for all $t\in[0,T)$, $u_{xx}(t,\xi(t))=0$. Thus if $(\sqrt{3}/\mpi)|\mu_0|\ge\mu_1$, one finds that
$$
\frac{d}{dt} w(t)\le-\frac1{2}w^2(t)+\frac1{2}K^2,$$
where $K$ is the same as  Theorem \ref{t4.5}. Then by means of the assumptions of Theorem \ref{t4.5} and following the
lines of the proof of Theorem \ref{t4.5}, we see that if
$$w(0)<-\sqrt{2\mu_1\left(\frac{\sqrt{3}}{3}|\mu_0|-\frac1{2}\mu_1\right)},$$ then $T$ is
finite and $\liminf\limits_{t\uparrow T}\left(\inf\limits_{x\in\mathbb{S}}u_x(t,x)\right)=-\infty$.
\end{remark}

Using the conserved quantities $H_2$, we can derive the following blow-up result.
\begin{theorem}\label{t4.6}
Let $u_0\in H^s(\mathbb{S}),  s > 3/2$ and $T>0$ be the maximal time of existence of the corresponding
solution $u(t,x)$ to (\ref{e1.1}) with the initial data $u_0$. If $\mu_1^4+4\mu^2_0\mu^2_1>8\mu_0H_2$ (in particular, $ \mu_0 H_2 \le 0$),
where $\mu_0,\mu_1$ are defined in (\ref{2.1}) and (\ref{2.2}). Then the corresponding
solution $u(t,x)$ to (\ref{e1.1}) blows up in finite time $T$ with
$$0<T\le6+\dfrac{1+\left|\int_{\mathbb{S}}u_{0x}^3(x)\;d x\right|}
{\frac{3}{2}\mu_1^4+6\mu^2_0\mu_1^2-12\mu_0H_2},$$
such that
$$\liminf\limits_{t\uparrow T}\left(\inf\limits_{x\in\mathbb{S}}u_x(t,x)\right)=-\infty.$$
\end{theorem}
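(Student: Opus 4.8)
The plan is to follow the scheme of the proof of Theorem \ref{t4.3}, but to estimate the cross term appearing there by means of the conservation law $H_2$ instead of Cauchy--Schwarz together with Lemma \ref{l2.2}. By Proposition \ref{p2.1} and a density argument it suffices to treat the case $s=3$; we may assume $\mu_0\neq 0$, and then $\mu_1>0$, since $\mu_1=0$ would force $u\equiv\mu_0$, for which $H_2=\mu_0^3$ and the hypothesis $\mu_1^4+4\mu_0^2\mu_1^2>8\mu_0H_2$ fails.

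Recall from the proof of Theorem \ref{t4.3} the identity (\ref{e3.1}),
$$\frac{d}{dt}\int_{\mathbb{S}}u_x^3\,dx=-\frac{1}{2}\int_{\mathbb{S}}u_x^4\,dx-\frac{3}{2}\mu_1^4+6\mu_0\int_{\mathbb{S}}(u-\mu_0)u_x^2\,dx.$$
Since $H_2=\mu_0\int_{\mathbb{S}}u^2\,dx+\frac{1}{2}\int_{\mathbb{S}}uu_x^2\,dx$ is conserved, we have $\int_{\mathbb{S}}uu_x^2\,dx=2H_2-2\mu_0\int_{\mathbb{S}}u^2\,dx$, while the Cauchy--Schwarz inequality on the unit circle gives $\int_{\mathbb{S}}u^2\,dx\ge(\int_{\mathbb{S}}u\,dx)^2=\mu_0^2$. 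Writing $6\mu_0\int_{\mathbb{S}}(u-\mu_0)u_x^2\,dx=6\mu_0\int_{\mathbb{S}}uu_x^2\,dx-6\mu_0^2\mu_1^2$ and combining these facts yields
$$6\mu_0\int_{\mathbb{S}}(u-\mu_0)u_x^2\,dx=12\mu_0H_2-12\mu_0^2\int_{\mathbb{S}}u^2\,dx-6\mu_0^2\mu_1^2\le 12\mu_0H_2-12\mu_0^4-6\mu_0^2\mu_1^2.$$
Substituting this into the identity above and discarding the nonpositive term $-12\mu_0^4$, we obtain, with
$$c_2:=\frac{3}{2}\mu_1^4+6\mu_0^2\mu_1^2-12\mu_0H_2=\frac{3}{2}\left(\mu_1^4+4\mu_0^2\mu_1^2-8\mu_0H_2\right)>0$$
by hypothesis, the differential inequality
$$\frac{d}{dt}\int_{\mathbb{S}}u_x^3\,dx\le-\frac{1}{2}\int_{\mathbb{S}}u_x^4\,dx-c_2.$$

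From this point the argument is verbatim that of Theorem \ref{t4.3}, with the constant $c_1$ there equal to $\frac{1}{2}$. Setting $V(t)=\int_{\mathbb{S}}u_x^3(t,x)\,dx$ and using Jensen's inequality on the probability space $\mathbb{S}$, namely $\int_{\mathbb{S}}u_x^4\,dx\ge(\int_{\mathbb{S}}u_x^3\,dx)^{4/3}$ (with the usual interpretation of the power when $V<0$), one finds $V'(t)\le-\frac{1}{2}V(t)^{4/3}-c_2\le-c_2<0$, so $V$ is strictly decreasing; at $t_1=(1+|V(0)|)/c_2$, which may be assumed $<T$ (otherwise $T\le t_1<\infty$ and the assertion follows from the blow-up scenario, Theorem \ref{t4.2}, applied with $2\lambda-1=3>0$), one has $V(t_1)\le|V(0)|-c_2t_1\le-1$. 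Then $V'(t)\le-\frac{1}{2}V(t)^{4/3}$ on $[t_1,T)$ forces $V(t)\le(6/(t-t_1-6))^3\to-\infty$ as $t\to t_1+6$, so that
$$0<T\le t_1+6=6+\frac{1+|\int_{\mathbb{S}}u_{0x}^3(x)\,dx|}{\frac{3}{2}\mu_1^4+6\mu_0^2\mu_1^2-12\mu_0H_2},$$
and the estimate $V(t)=\int_{\mathbb{S}}u_x^3\,dx\ge\mu_1^2\inf_{x\in\mathbb{S}}u_x(t,x)$ then yields $\liminf_{t\uparrow T}(\inf_{x\in\mathbb{S}}u_x(t,x))=-\infty$. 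The only new ingredient compared with Theorem \ref{t4.3} is the algebraic substitution via $H_2$, so there is no genuine obstacle; the one point deserving care is the identity $\frac{3}{2}\mu_1^4+6\mu_0^2\mu_1^2-12\mu_0H_2=\frac{3}{2}(\mu_1^4+4\mu_0^2\mu_1^2-8\mu_0H_2)$, which shows that the hypothesis is precisely what makes $c_2>0$ (and, in particular, $c_2\ge\frac{3}{2}\mu_1^4+6\mu_0^2\mu_1^2>0$ whenever $\mu_0H_2\le0$).
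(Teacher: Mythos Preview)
Your proof is correct and follows essentially the same approach as the paper's own proof: both start from the identity (\ref{e3.1}), replace $6\mu_0\int_{\mathbb{S}}uu_x^2\,dx$ via the conservation law $H_2$, drop the resulting nonpositive term involving $\int_{\mathbb{S}}u^2\,dx$, and then repeat the Lyapunov argument of Theorem \ref{t4.3} with $c_1=\tfrac{1}{2}$. The only cosmetic difference is that you first bound $-12\mu_0^2\int_{\mathbb{S}}u^2\,dx\le -12\mu_0^4$ and then discard $-12\mu_0^4$, whereas the paper simply discards $-12\mu_0^2\int_{\mathbb{S}}u^2\,dx\le 0$ directly; the intermediate Cauchy--Schwarz step is unnecessary but harmless.
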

\begin{proof}
Again it suffices to consider the case $s=3$. Recall that
$$H_2=\int_{\Bbb S}\left (\mu_0u^2+\frac1{2}uu_x^2 \right )d x $$ is independent of time $ t. $
In view of  (\ref{e3.1}), we obtain
 \begin{align*}
 \dfrac{d}{d t}\int_{\mathbb{S}}u_x^3\;d x&=-\frac1{2}\int_{\mathbb{S}}u_x^4\;d x-\frac{3}{2}\mu_1^4
 +6\mu_0\int_{\mathbb{S}}uu_x^2\;d x-6\mu^2_0\int_{\mathbb{S}}u_x^2\;d x\\
 &=-\frac1{2}\int_{\mathbb{S}}u_x^4\;d x-\frac{3}{2}\mu_1^4+12\mu_0H_2-12\mu_0^2\int_{\mathbb{S}}u^2\;d x-6\mu_0^2\mu_1^2\\
 &\le-\frac1{2}\int_{\mathbb{S}}u_x^4\;d x-\frac{3}{2}\mu_1^4+12\mu_0H_2-6\mu_0^2\mu_1^2.
 \end{align*}
  By the assumption of the theorem, we have that $ \displaystyle \mu_1^4+4\mu^2_0\mu^2_1>8\mu_0H_2. $
 Let
 $$c_1=\frac1{2}>0,\quad c_2=\frac{3}{2}\mu_1^4+6\mu^2_0\mu^2_1-12\mu_0H_2>0.$$
 It then follows that
 $$\dfrac{d}{d t}\int_{\mathbb{S}}u_x^3\;d x\le -c_1\int_{\mathbb{S}}u_x^4\;d x-c_2\le
 -c_1\left (\int_{\mathbb{S}}u_x^3\;d x\right )^{\frac{4}{3}}-c_2.$$
Define  $V(t)=\int_{\mathbb{S}}u_x^3(t, x)\ d x$ with $ t \in [0, T).$ It is clear that
 $$\dfrac{d}{d t}V(t)\le-c_1(V(t))^{\frac{4}{3}}-c_2\le-c_2<0,\quad t\in[0,T).$$
Let $t_1=(1+|V(0)|)/c_2$. Then following the proof  of  Theorem \ref{t4.3},  we have
 $$T\le t_1+\frac{3}{c_1}<+\infty.$$
 This implies the desired result as in Theorem \ref{t4.6}.
\end{proof}

\subsection {Blow-up for the $\mu$DP equation}

The first blow-up result for the $\mu$DP equation is given in the following.

\begin{theorem}\label{t4.7}
Let $u_0 \in H^s(\mathbb{S}), s > 3/2$ and $T>0$ be the maximal time of existence of the corresponding
solution $u(t,x)$ to (\ref{e1.2}) with the initial data $u_0$. If $\mu_0\tilde{H}_2 \le 0$, i.e.
$ \displaystyle \int_{\mathbb{S}} \left ( \frac{3}{2} \mu_0^2 (A^{-1} \partial_xu_0)^2 + \frac{\mu_0}{6} u_0^3 \right )\ dx  \le 0, $ then the corresponding
solution $ u(t, x) $ to (\ref{e1.2}) associated with the $\mu DP $ equation must blow up in finite time $ T > 0 $.
 If $ \mu_0 \neq 0, $ then
$$0<T\le1+\dfrac{1+|u'_0(\xi_0))|}{3\mu^2_0},$$
such that
$$\liminf\limits_{t\uparrow T}(\inf\limits_{x\in\mathbb{S}}u_x(t,x))=-\infty $$
where $ \mu_0 u_0(\xi_0) \le 0. $
\end{theorem}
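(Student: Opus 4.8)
The plan is to integrate a Riccati-type differential inequality along a single characteristic, chosen so that $u$ stays controlled on it, the control being supplied by the conserved quantity $\mu_0\tilde{H}_2$. As in the previous proofs a density argument reduces everything to $s=3$, and since only the case $\mu_0\neq0$ is of interest here we assume $\mu_0\neq0$. Setting $\lambda=3$ in (\ref{e1.4}) gives $u_t+uu_x=-3\mu_0A^{-1}\partial_xu$, and differentiating in $x$ and using (\ref{e1.7}) for $A^{-1}\partial_x^2$ produces
$$u_{tx}+uu_{xx}=-u_x^2+3\mu_0u-3\mu_0^2 .$$
Hence, letting $q$ be the flow (\ref{e1.8}) started at a point $x_0$ still to be chosen and putting $w(t):=u_x(t,q(t,x_0))$, one gets
$$\frac{d}{dt}w(t)=-w^2(t)+3\mu_0\,u(t,q(t,x_0))-3\mu_0^2 .$$
The obstruction noted in the Introduction is that, unlike for $\mu$CH, $\|u(t)\|_{L^\infty}$ need not stay bounded, so the middle term is only helpful if $x_0$ is chosen so that $\mu_0u$ keeps a favorable sign along $t\mapsto q(t,x_0)$.

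This is where the hypothesis is used. Since $\mu(u)\equiv\mu_0$, the quantity $\mu_0\tilde{H}_2=\int_{\mathbb S}\bigl(\tfrac32\mu_0^2(A^{-1}\partial_xu)^2+\tfrac{\mu_0}{6}u^3\bigr)\,dx$ is conserved, and as its first term is non-negative we obtain $\mu_0\int_{\mathbb S}u^3(t,x)\,dx\le0$ for every $t\in[0,T)$. Together with $\int_{\mathbb S}u(t,x)\,dx=\mu_0\neq0$, continuity and the intermediate value theorem then show that $u(t,\cdot)$ vanishes somewhere for each $t$; at $t=0$ this gives a point $\xi_0$ with $u_0(\xi_0)=0$, so $\mu_0u_0(\xi_0)=0$, which is the $\xi_0$ appearing in the statement. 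The core of the argument is to show that the characteristic through $\xi_0$ never leaves the region $\{x:\mu_0u(t,x)\le0\}$. I would prove this by a continuity argument: at a first exit time $t^*\in(0,T)$ one would have $u(t^*,q(t^*,\xi_0))=0$ and $\mu_0\frac{d}{dt}u(t,q(t,\xi_0))\big|_{t=t^*}\ge0$; since $\frac{d}{dt}u(t,q(t,x))=-3\mu_0(A^{-1}\partial_xu)(t,q(t,x))$, this forces $(A^{-1}\partial_xu)(t^*,q(t^*,\xi_0))\le0$, and one then contradicts this using the explicit representation (\ref{e1.6}) of $A^{-1}\partial_x$ and the sign of $\mu_0\int_{\mathbb S}u^3\,dx$. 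I expect this sign-propagation step to be the only genuinely delicate point; the rest is routine.

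Granting $\mu_0u(t,q(t,\xi_0))\le0$ on $[0,T)$, the identity for $w$ becomes
$$\frac{d}{dt}w(t)\le-w^2(t)-3\mu_0^2\le-3\mu_0^2<0 .$$
Integrating once gives $w(t)\le u_0'(\xi_0)-3\mu_0^2t$, so at $t_1:=(1+|u_0'(\xi_0)|)/(3\mu_0^2)$ one has $w(t_1)\le-1$ (if $t_1\ge T$ nothing more is needed). On $[t_1,T)$ the inequality $\frac{d}{dt}w\le-w^2$ with $w(t_1)\le-1<0$ yields $\frac{d}{dt}\bigl(1/w(t)\bigr)\ge1$, hence $1/w(t)\ge-1+(t-t_1)$; since $w(t)<0$ this forces $t<t_1+1$ and $w(t)\to-\infty$ as $t\uparrow t_1+1$. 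Therefore $T\le t_1+1=1+\dfrac{1+|u_0'(\xi_0)|}{3\mu_0^2}$, and since $\inf_{x\in\mathbb S}u_x(t,x)\le w(t)$ we conclude $\liminf_{t\uparrow T}\bigl(\inf_{x\in\mathbb S}u_x(t,x)\bigr)=-\infty$, in agreement with Theorem \ref{t4.2} applied with $\lambda=3$ (for which $2\lambda-1=5>0$). The same differential inequality will also yield the exact blow-up rate in Section 4.
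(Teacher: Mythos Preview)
Your sign-propagation step is a genuine gap, and the contradiction you sketch does not close it. You propose that at a first exit time $t^*$ one has $(A^{-1}\partial_x u)(t^*,q(t^*,\xi_0))\le 0$ and that this is incompatible with (\ref{e1.6}) together with $\mu_0\int_{\mathbb S}u^3\,dx\le 0$. But formula (\ref{e1.6}) expresses $A^{-1}\partial_x u$ at a point as an affine term plus an indefinite primitive of $u$; it carries no pointwise sign information from the global quantity $\int u^3$, so there is nothing to contradict. Worse, your $\xi_0$ is a \emph{zero} of $u_0$, so the characteristic starts on the boundary of $\{\mu_0 u\le 0\}$: already at $t=0$ the sign of $\mu_0\tfrac{d}{dt}u(t,q(t,\xi_0))\big|_{t=0}=-3\mu_0^2(A^{-1}\partial_x u_0)(\xi_0)$ is unconstrained by the hypothesis, and the characteristic may exit immediately. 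The set $\{\mu_0 u(t,\cdot)\le 0\}$ is simply not flow-invariant, and no amount of information about $\int u^3$ will make it so.

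The paper avoids this by \emph{not} following a fixed characteristic. It uses only the conservation of $\tilde H_2$ together with $\mu_0\tilde H_2\le 0$: if for some fixed $t$ one had $\mu_0 u(t,x)>0$ for all $x\in\mathbb S$, then both terms in
\[
\mu_0\tilde H_2=\int_{\mathbb S}\Bigl(\tfrac32\mu_0^2(A^{-1}\partial_x u)^2+\tfrac16(\mu_0 u)\,u^2\Bigr)\,dx
\]
would be nonnegative with the second strictly positive, a contradiction. Hence for every $t\in[0,T)$ there exists $\xi(t)\in\mathbb S$ with $\mu_0 u(t,\xi(t))\le 0$, obtained pointwise in time with no propagation argument at all. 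The paper then writes $\xi(t)=q(t,x_0(t))$ via the diffeomorphism $q(t,\cdot)$ and sets $w(t)=u_x(t,\xi(t))$; from that point on your Riccati computation and integration are correct and match the paper line for line.
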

\begin{proof} Since the case $ \mu_0 = 0 $ was proved in \cite {len2}, we only need to show the case $ \mu_0 \neq 0. $  Again as discussed previously, it suffices to prove the theorem only with  $ s = 3.$   To this end, by the assumption $ \mu_0 \tilde{H}_2 \le 0, $ we firstly want to show there exists some $ \xi(t) \in \mathbb{S} $ for any fixed $ t \in [0, T) $ with the maximum existence time $ T > 0 $ and $ \xi(0) = \xi_0 $ such that $ \mu_0 u(t, \xi(t)) \le 0 $ for any fixed $ t \in [0, T). $  If not, then $ \mu_0 u(t, x) > 0 $ with any $ x \in \mathbb{S} $ and some $ t \in [0, T). $  Recall that
$$
\tilde{H}_2= \int_{\Bbb S}\left (\frac{3}{2}\mu(u) (A^{-1}\partial_xu)^2+\frac1{6}u^3 \right )\ dx = \int_{\Bbb S}\left (\frac{3}{2}\mu_0(A^{-1}\partial_xu_0)^2+\frac1{6}u_0^3 \right )\ dx
$$
and $  \mu (u(t)) = \mu(u_0) = \mu_0 $ are independent of time $ t. $  It then follows that
$$
\mu_0 \tilde{H}_2= \int_{\Bbb S}\left (\frac{3}{2}\mu^2_0 (A^{-1}\partial_xu )^2+\frac1{6} (\mu_0 u) u^2 \right )\ dx > 0
$$
which contradicts the assumption $ \mu_0 \tilde H_2 \le 0. $

Next we take the trajectory $ q(t, x) $ defined in (\ref{e1.8}). Since $ q(t,\cdot):$   $ \mathbb{R} \to \mathbb{R} $ is a diffeomorphism for every $ t \in [0, T). $ It is then inferred that there is $ x_0(t) \in \mathbb{R} $ such that
$$
q(t, x_0(t)) = \xi(t), \qquad t \in [0, T)
$$ with $ x_0(0) = \xi_0. $
Define $w(t)=u_x(t,q(t,x_0(t)))$, where $q(t,x_0(t))$ is the flow of $u(t,q(t,x_0(t)))$.
Differentiating the $\mu$DP equation with respect to $x$ yields
$$u_{tx}+u^2_x+uu_{xx}+3\mu_0A^{-1}\partial^2_xu=0,$$
from (\ref{e1.7}) we can deduce that
\begin{align}\label{e4.1}
u_{tx}=-u^2_x-uu_{xx}+3\mu_0(u-\mu_0).
\end{align}
 However
$$\dfrac{d}{d t}w(t)=(u_{tx}+u_{xx}q_t)(t,q(t,x_0(t)))=(u_{tx}+uu_{xx})(t,q(t,x_0(t))).$$
Substituting $(t,q(t,x_0(t)))$ into the equation (\ref{e4.1}), we obtain
\begin{align*}
\dfrac{d}{d t}w(t)=-w^2(t)+3\mu_0u(t,q(t,x_0(t)))-3\mu_0^2.
\end{align*}
By the assumption of the theorem, $\mu_0u(t,q(t,x_0(t))) = \mu_0 u(t, \xi(t)) \le0$. This implies that
\begin{align*}
\dfrac{d}{d t}w(t) \le -w^2(t)-3\mu_0^2,\quad t\ge0.
\end{align*}
Let $t_1=(1+|u'_0(\xi_0)|)/(3\mu^2_0)$. Then similar to the
proof of  Theorem \ref{t4.3}, one finds that $w(t_1)\le-1$ and
$$-\dfrac1{w(t)}-1\le-\dfrac1{w(t)}+\dfrac1{w(t_1)}\le-(t-t_1),\qquad t\in[t_1,T).$$
Therefore
 $$w(t)\le\dfrac1{t-t_1-1}\rightarrow-\infty, \quad\text{as}\quad t\rightarrow t_1+1,$$
which implies that  $ \displaystyle T\le t_1+1<+\infty $ with
 $$\liminf\limits_{t\uparrow T}\left(\inf\limits_{x\in\mathbb{S}}u_x(t,x)\right)=-\infty.$$
 This completes the proof of Theorem \ref{t4.7}.
\end{proof}

\begin{remark}It is observed in the proof of Theorem \ref{t4.7} that we may control the sign of the
conserved quantity $\tilde{H}_2$ to guarantee the blow-up  solutions
of the $\mu$DP equation. This  new method is expected to have
further applications to other nonlinear wave equations with a part of the Burgers equation.
\end{remark}

We are now in a position to give another interesting blow-up result for the
$\mu$DP equation.
\begin{theorem}\label{t4.4}
Let $u_0 \in H^s(\mathbb{S}), s > 3/2$ and $T>0$ be the maximal time of existence of the corresponding
solution $u(t,x)$ to (\ref{e1.2}) with the initial data $u_0$. If
$$|\mu_0|<\sqrt{\frac{32\mpi^2-9}{32\mpi^2}}\mu_2,
$$
where $\mu_0,\mu_2$ are defined in
(\ref{2.1}) and (\ref{2.3}), then the corresponding
solution $u(t,x)$ to (\ref{e1.2}) must blow up in finite time $T>0$ with
$$0<T\le\inf\limits_{\alpha \in I}\left (\dfrac{6}{4-9\alpha|\mu_0|}+2\alpha\dfrac{1+|\int_{\mathbb{S}}u_{0x}^3(x)d x|}
{72\mpi^2\alpha\mu_0^2(\mu_2^2-\mu_0^2)-9|\mu_0|\mu_2^2}\right ),$$
where $ \displaystyle I = \left (\frac{\mu_2^2}{8\mpi^2|\mu_0|(\mu_2^2-\mu_0^2)}, \, \frac{4}{9|\mu_0|} \right )$
such that
$$\liminf\limits_{t\uparrow T}\left (\inf\limits_{x\in\mathbb{S}}u_x(t,x)\right )=-\infty.$$
\end{theorem}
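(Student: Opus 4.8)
The plan is to adapt the Lyapunov-function argument of Theorem \ref{t4.3} to the $\mu$DP equation, replacing the conserved quantity $\mu_1^2 = \int_{\mathbb S} u_x^2\,dx$ (which is not available here) with the conserved quantity $\mu_2^2 = \int_{\mathbb S} u^2\,dx$. As usual it suffices to treat $s=3$, and since $\mu_0 = 0$ is excluded, we have $\mu_0 \neq 0$; the hypothesis $|\mu_0| < \sqrt{(32\pi^2-9)/(32\pi^2)}\,\mu_2$ forces in particular $\mu_2^2 - \mu_0^2 > 0$, so the interval $I$ is nonempty (one should first check $\mu_2^2/(8\pi^2|\mu_0|(\mu_2^2-\mu_0^2)) < 4/(9|\mu_0|)$, which is exactly the stated bound on $|\mu_0|$). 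First I would differentiate the $\mu$DP equation in $x$ to get (\ref{e4.1}), namely $u_{tx} = -u_x^2 - uu_{xx} + 3\mu_0(u-\mu_0)$, then multiply by $3u_x^2$ and integrate over $\mathbb S$. Using $\int_{\mathbb S} 3uu_x^2 u_{xx}\,dx = -\int_{\mathbb S} u_x^4\,dx$ (integration by parts) this yields an identity of the shape
$$
\frac{d}{dt}\int_{\mathbb S} u_x^3\,dx = -2\int_{\mathbb S} u_x^4\,dx + 9\mu_0\int_{\mathbb S}(u-\mu_0)u_x^2\,dx,
$$
analogous to (\ref{e3.1}) but with the coefficient $-2$ in front of $\int u_x^4$ coming from $-3 + 1 = -2$, and with no $\mu_1^4$ term since $\int u_x^2$ is not conserved.

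The next step is to control the cross term $\int_{\mathbb S}(u-\mu_0)u_x^2\,dx$. Since $\int_{\mathbb S}(u-\mu_0)\,dx = 0$, I would estimate, for any $\alpha > 0$,
$$
\int_{\mathbb S}(u-\mu_0)u_x^2\,dx \le \Big(\int_{\mathbb S}(u-\mu_0)^2\,dx\Big)^{1/2}\Big(\int_{\mathbb S}u_x^4\,dx\Big)^{1/2} \le \frac{\alpha}{2}\int_{\mathbb S}u_x^4\,dx + \frac{1}{2\alpha}\int_{\mathbb S}(u-\mu_0)^2\,dx,
$$
and then bound $\int_{\mathbb S}(u-\mu_0)^2\,dx = \mu_2^2 - \mu_0^2$ directly (expanding the square and using $\int u\,dx = \mu_0$, $\int u^2\,dx = \mu_2^2$) — this is the analogue of the Poincaré-type step, but here it is cleaner since it uses only the conservation of $\mu_0$ and $\mu_2$. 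Combining gives
$$
\frac{d}{dt}\int_{\mathbb S}u_x^3\,dx \le \Big(\tfrac{9}{2}\alpha|\mu_0| - 2\Big)\int_{\mathbb S}u_x^4\,dx + \frac{9|\mu_0|}{2\alpha}(\mu_2^2-\mu_0^2).
$$
Wait — to match the claimed interval $I$ and the factor $8\pi^2$, one actually needs the sharper bound; I would instead bound $\int_{\mathbb S}(u-\mu_0)^2\,dx$ by $\frac{1}{4\pi^2}\int_{\mathbb S}u_x^2\,dx$ via Lemma \ref{l2.2}, and only afterwards bound $\int u_x^2$ — no, that reintroduces $\int u_x^2$. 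The correct route is: use Lemma \ref{l2.2} to get $\int(u-\mu_0)^2 \le \frac{1}{4\pi^2}\int u_x^2\,dx$ and then Cauchy–Schwarz $\int u_x^2 \le (\int u_x^4)^{1/2}$; feeding this into the cross-term estimate produces a second $\int u_x^4$ contribution with coefficient controlled by $\alpha$ and $\pi^2$, and the residual constant term is where $\mu_2^2-\mu_0^2$ enters. Choosing $\alpha \in I$ makes the $\int u_x^4$ coefficient negative while keeping the constant term $c_2 > 0$ — this is exactly the role of the two endpoints of $I$ — and then $V(t) := \int_{\mathbb S}u_x^3\,dx$ satisfies $\frac{d}{dt}V \le -c_1 V^{4/3} - c_2$ by Jensen/power-mean ($\int u_x^4 \ge (\int u_x^3)^{4/3}$ on the unit circle).

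From $\frac{d}{dt}V \le -c_1 V^{4/3} - c_2 \le -c_2 < 0$ the argument is identical to Theorem \ref{t4.3}: set $t_1 = (1+|V(0)|)/c_2$, conclude $V(t_1) \le -1$, then from $\frac{d}{dt}V \le -c_1 V^{4/3}$ on $[t_1,T)$ deduce $V$ reaches $-\infty$ by time $t_1 + 3/c_1$, giving the explicit bound on $T$ after substituting the values of $c_1, c_2$ in terms of $\alpha$, $\mu_0$, $\mu_2$ and taking the infimum over $\alpha \in I$. Finally, from $\int_{\mathbb S}u_x^3\,dx \ge \inf_{x}u_x(t,x)\int_{\mathbb S}u_x^2\,dx$ one cannot immediately conclude $\inf u_x \to -\infty$ because $\int u_x^2$ is no longer conserved and could in principle also degenerate; here I would instead argue via Theorem \ref{t4.2}, or observe directly that if $\inf u_x$ stayed bounded below then $V(t) = \int u_x^3 \ge (\inf u_x)\int u_x^2 \ge -M\cdot(\text{something})$ — in fact the cleanest fix is: $V(t) \le \|u_x(t)\|_{L^\infty}\int u_x^2\,dx$ is the wrong sign, so instead note $\int u_x^4 \to \infty$ forces (by Theorem \ref{t4.1} and its consequence Theorem \ref{t4.2}) that $\inf_x u_x \to -\infty$, since $\|u_x\|_{L^\infty}$ bounded would keep $\int u_x^4 \le \|u_x\|_{L^\infty}^2\int u_x^2$ and $\int u_x^2$ controlled by $\|u\|_{H^1}$, contradicting blow-up of $V$. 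The main obstacle I anticipate is precisely this last point together with getting the constants in $c_1, c_2$ to line up exactly with the stated interval $I$ and the hypothesis constant $\sqrt{(32\pi^2-9)/(32\pi^2)}$ — the blow-up mechanism itself is routine once the differential inequality for $V$ is in hand.
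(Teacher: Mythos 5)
Your setup (reduction to $s=3$, the identity obtained by multiplying (\ref{e4.1}) by $3u_x^2$, the Lyapunov function $V(t)=\int_{\mathbb S}u_x^3\,dx$ with the ODE comparison $V'\le -c_1V^{4/3}-c_2$, and the final appeal to Theorem \ref{t4.2}) matches the paper. But the heart of the proof — producing a differential inequality with a strictly \emph{negative} constant $c_2$ — is where your proposal has a genuine gap. By regrouping the right-hand side as $9\mu_0\int_{\mathbb S}(u-\mu_0)u_x^2\,dx$ you discard the one term that can supply negativity. Your first estimate gives
\begin{equation*}
\frac{d}{dt}\int_{\mathbb S}u_x^3\,dx\le\Bigl(\tfrac92\alpha|\mu_0|-2\Bigr)\int_{\mathbb S}u_x^4\,dx+\frac{9|\mu_0|}{2\alpha}\bigl(\mu_2^2-\mu_0^2\bigr),
\end{equation*}
whose constant term is \emph{positive}; an inequality $V'\le -c_1V^{4/3}+C$ with $C>0$ does not force $V\to-\infty$ (nor $V(t_1)\le -1$), so no blow-up follows for the stated class of data. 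Your attempted repair is incoherent as written: feeding Lemma \ref{l2.2} plus $\int_{\mathbb S}u_x^2\,dx\le(\int_{\mathbb S}u_x^4\,dx)^{1/2}$ into the cross term produces a term of order $(\int_{\mathbb S}u_x^4\,dx)^{1/2}$, not a negative constant, and after Young's inequality the residual constant does not involve $\mu_2^2-\mu_0^2$ at all — so neither the hypothesis $|\mu_0|<\sqrt{(32\pi^2-9)/(32\pi^2)}\,\mu_2$ nor the interval $I$ can emerge from that route.

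The missing idea in the paper is to keep the two terms $9\mu_0\int_{\mathbb S}uu_x^2\,dx-9\mu_0^2\int_{\mathbb S}u_x^2\,dx$ \emph{separate}: estimate the first by Cauchy--Schwarz and Young using the conserved $\int_{\mathbb S}u^2\,dx=\mu_2^2$, namely $9\mu_0\int_{\mathbb S}uu_x^2\,dx\le\frac92\alpha|\mu_0|\int_{\mathbb S}u_x^4\,dx+\frac{9|\mu_0|}{2\alpha}\mu_2^2$, and bound the second from \emph{above} by using Lemma \ref{l2.2} as a lower bound on $\int_{\mathbb S}u_x^2\,dx$: since $\int_{\mathbb S}(u-\mu_0)\,dx=0$ and $\int_{\mathbb S}(u-\mu_0)^2\,dx=\mu_2^2-\mu_0^2$ is conserved, $\int_{\mathbb S}u_x^2\,dx\ge 4\pi^2(\mu_2^2-\mu_0^2)$, hence $-9\mu_0^2\int_{\mathbb S}u_x^2\,dx\le-36\pi^2\mu_0^2(\mu_2^2-\mu_0^2)$. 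This yields $c_1=2-\frac92\alpha|\mu_0|$ and $c_2=36\pi^2\mu_0^2(\mu_2^2-\mu_0^2)-\frac{9}{2\alpha}|\mu_0|\mu_2^2$, both positive precisely when $\alpha\in I$, which is where the hypothesis on $|\mu_0|$ and the factor $8\pi^2$ come from; from there your Lyapunov argument and the conclusion via Theorem \ref{t4.2} go through exactly as you describe.
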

\begin{proof}Since the case $ \mu_0 = 0 $ was proved in \cite {len2}, we only need to show the case $ \mu_0 \neq 0. $
Similar to the proof of above theorem, it suffices to consider the
case $s=3$. Recall  (\ref{e4.1}), that is
\begin{align*}
u_{tx}+u^2_x+uu_{xx}=3u\mu_0-3\mu_0^2.
\end{align*}
Multiplying the above equation by $3u^2_x$ and integrating on
$\mathbb{S}$ with respect to $x$, we obtain
\begin{align*}
&\dfrac{d}{d t}\int_{\mathbb{S}}u_x^3\;dx=\int_{\mathbb{S}}3u_x^2u_{xt}\;dx\\
&\quad=-3\int_{\mathbb{S}}u_x^4\;dx-\int_{\mathbb{S}}3uu_x^2u_{xx}\;dx+9\mu_0\int_{\mathbb{S}}uu_x^2\;dx-
9\mu^2_0\int_{\mathbb{S}}u_x^2\;dx\\
&\quad=-2\int_{\mathbb{S}}u_x^4\;dx+9\mu_0\int_{\mathbb{S}}uu_x^2\;dx-
9\mu^2_0\int_{\mathbb{S}}u_x^2\;dx.
\end{align*}
On the other hand, it follows from Lemma \ref{l2.2} that
\begin{align*}
\int_{\mathbb{S}}(u-\mu_0)^2\;dx\le\frac1{4\mpi^2}\int_{\mathbb{S}}u_x^2\;dx.
\end{align*}
Or, what is the same,
\begin{align*}
\int_{\mathbb{S}}u_x^2\;dx\ge4\mpi^2\int_{\mathbb{S}}(u-\mu_0)^2\;dx.
 \end{align*}
It is also easy to see that
 \begin{align*}
 \int_{\mathbb{S}}(u-\mu_0)^2\;dx=\int_{\mathbb{S}}(u^2-2\mu_0u+\mu_0^2)\;dx=\mu_2^2-\mu_0^2.
 \end{align*}
This implies in turn that
\begin{align*}
\int_{\mathbb{S}}u_x^2\;dx\ge4\mpi^2(\mu_2^2-\mu_0^2).
 \end{align*}
In addition, it is observed that
$$
\int_{\mathbb{S}}uu_x^2\;dx \le \left (\int_{\mathbb{S}}u^2\;dx \right
)^{\frac1{2}} \left (\int_{\mathbb{S}}u_x^4\;dx \right )^{\frac1{2}}
 \le\frac{\alpha}{2}\int_{\mathbb{S}}u_x^4\;dx+\frac1{2\alpha}\int_{\mathbb{S}}u^2\;dx
  =\frac{\alpha}{2}\int_{\mathbb{S}}u_x^4\;dx+\frac1{2\alpha}\mu_2^2.
$$
 Therefore, combining the above inequalities yields
 $$
 \dfrac{d}{d t}\int_{\mathbb{S}}u_x^3\;dx\le \left (\frac{9}{2}\alpha|\mu_0|-2 \right )\int_{\mathbb{S}}u_x^4
 \;dx+\frac{9}{2\alpha}|\mu_0|\mu_2^2-36\mpi^2\mu_0^2(\mu_2^2-\mu_0^2).
 $$
 Set $$ \frac{9}{2}\alpha|\mu_0|-2<0,$$
which is to say, $ \displaystyle \alpha<4/(9|\mu_0|). $ If $
\alpha > 0 $ also satisfies
$$ 36\mpi^2\mu_0^2(\mu_2^2-\mu_0^2)-\frac{9}{2\alpha}|\mu_0|\mu_2^2>0,$$
then one finds that
$$\alpha>\frac{\mu_2^2}{8\mpi^2|\mu_0|(\mu_2^2-\mu_0^2)}.$$
By the assumption of this theorem, we know that
$$|\mu_0| < \sqrt{\frac{32\mpi^2-9}{32\mpi^2}}\mu_2.$$
Therefore one can choose $\alpha>0$ such that
$$\frac{\mu_2^2}{8\mpi^2|\mu_0|(\mu_2^2-\mu_0^2)}<\alpha<\frac{4}{9|\mu_0|}.$$
Let
 $$c_1=2-\frac{9}{2}\alpha|\mu_0|>0,\quad c_2=36\mpi^2\mu_0^2(\mu_2^2-\mu_0^2)-\frac{9}{2\alpha}|\mu_0|\mu_2^2>0.
 $$
This then follows that
 $$\dfrac{d}{d t}\int_{\mathbb{S}}u_x^3\ dx\le -c_1\int_{\mathbb{S}}u_x^4\ dx-c_2\le
 -c_1\left (\int_{\mathbb{S}}u_x^3\ dx\right )^{\frac{4}{3}}-c_2.$$
 Again, define  $V(t)=\int_{\mathbb{S}}u_x^3(t, x)  \ dx$ with $ t \in [0, T). $  Then we have
 $$\dfrac{d}{d t}V(t)\le-c_1(V(t))^{\frac{4}{3}}-c_2\le-c_2<0,\quad t\in[0,T).$$
Similar to the proof of the Theorem \ref{t4.3}, we define
$t_1=(1+|V(0)|)/c_2$ and conclude $\displaystyle T \le
t_1+\frac{3}{c_1}<+\infty. $ In consequence of Theorem \ref{t4.2}, we obtain
$$
\liminf\limits_{t\uparrow T}\left
(\inf\limits_{x\in\mathbb{S}}u_x(t,x)\right )=-\infty.
$$
This completes the proof of Theorem \ref{t4.4}.
\end{proof}

\section{Blow-up rate}
Our attention is now turned to the question of the blow-up rate of
the slope to a breaking wave for the initial value problem
(\ref{e1.1}) and (\ref{e1.2}).
\begin{theorem}\label{t4.9}
Let $u(t,x)$  be the solution to the initial value problem
(\ref{e1.1}) associated with the $\mu$CH equation with initial data
$u_0 \in H^s(\mathbb{S}), s > 3/2 $. Let $T>0$ be the maximal time of existence
of the solution $u(t,x)$. If $T<\infty$, we have
$$\underset{t\uparrow T}{\lim}\left\{\underset{x\in \mathbb{S}}{\inf}
[u_x(t,x)(T-t)]\right\}=-2$$ while the solution remains
uniformly bounded.
\end{theorem}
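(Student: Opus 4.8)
The plan is to track the extremal slope along characteristics and show it satisfies a Riccati-type differential inequality whose coefficients converge as $t\uparrow T$. First I would invoke Lemma \ref{l2.3} to pick, for a.e.\ $t\in[0,T)$, a point $\xi(t)\in\mathbb{S}$ with $w(t):=\inf_{x\in\mathbb{S}}u_x(t,x)=u_x(t,\xi(t))$, so that $u_{xx}(t,\xi(t))=0$. Evaluating the differentiated $\mu$CH equation \eqref{e4.0} at $(t,\xi(t))$ then gives, a.e.\ on $(0,T)$,
\begin{equation*}
\frac{d w}{d t}=-\frac1{2}w^2(t)+2\mu_0 u(t,\xi(t))-2\mu_0^2-\frac1{2}\mu_1^2=:-\frac1{2}w^2(t)+f(t),
\end{equation*}
where, crucially, the inhomogeneous term $f(t)=2\mu_0 u(t,\xi(t))-2\mu_0^2-\tfrac12\mu_1^2$ is \emph{bounded uniformly in $t$}: by \eqref{e2.2} the quantity $\|u(t,\cdot)\|_{L^\infty}\le|\mu_0|+\tfrac{\sqrt3}{6}\mu_1$ stays bounded on $[0,T)$ (this is where the hypothesis ``the solution remains uniformly bounded'' is used, together with the $\mu$CH conservation laws). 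Set $K_0:=\sup_{t\in[0,T)}|f(t)|<\infty$.

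Next I would show $w(t)\to-\infty$ as $t\uparrow T$. Since $T<\infty$, Theorem \ref{t4.2} (with $\lambda=2$) forces $\liminf_{t\uparrow T}w(t)=-\infty$; combining this with the inequality $\tfrac{dw}{dt}\le-\tfrac12 w^2+K_0$ one gets that once $w(t_0)<-\sqrt{2K_0}$ for some $t_0$, $w$ is strictly decreasing thereafter and in fact $w(t)\to-\infty$, so the $\liminf$ is a genuine limit. Fix $\varepsilon\in(0,1)$. On the interval where $w(t)^2$ is large enough we have the two-sided bound
\begin{equation*}
-\frac{1+\varepsilon}{2}\,w^2(t)\le\frac{d w}{d t}\le-\frac{1-\varepsilon}{2}\,w^2(t),\qquad t\in[t_\varepsilon,T),
\end{equation*}
valid for $t_\varepsilon$ close enough to $T$ that $w(t)^2\ge 2K_0/\varepsilon$ there. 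Dividing by $-w^2$ and integrating from $t$ to $T$ (using $1/w(s)\to0$ as $s\uparrow T$) yields
\begin{equation*}
\frac{1-\varepsilon}{2}(T-t)\le-\frac1{w(t)}\le\frac{1+\varepsilon}{2}(T-t),\qquad t\in[t_\varepsilon,T),
\end{equation*}
that is, $-\tfrac{2}{1+\varepsilon}\le w(t)(T-t)\le-\tfrac{2}{1-\varepsilon}$. Since $\varepsilon\in(0,1)$ is arbitrary, $\lim_{t\uparrow T}w(t)(T-t)=-2$, which is the claimed statement because $w(t)=\inf_{x\in\mathbb{S}}u_x(t,x)$.

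The main obstacle, and the only nontrivial point, is the regularity needed to justify the ODE argument: $w(t)$ is only absolutely continuous (Lemma \ref{l2.3}), and one must be careful that the inequalities hold a.e.\ and that integrating them is legitimate, and that $\xi(t)$ can indeed be chosen so $u_{xx}(t,\xi(t))=0$ at an interior minimum — this is standard for $C^1([0,T);H^2)$ solutions but should be stated. A secondary point worth a sentence is confirming that $f(t)$ really is bounded: this rests on \eqref{e2.2}, i.e.\ on the $\mu$CH-specific conservation of $\mu(u)$ and of $\int_{\mathbb S}u_x^2\,dx$, which is exactly why the theorem is stated for $\mu$CH and why the blow-up rate comes out to the universal constant $-2$ independent of $\mu_0,\mu_1$. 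Everything else is the routine Riccati comparison used in all Camassa--Holm-type blow-up-rate proofs.
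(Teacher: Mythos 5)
Your argument is essentially the paper's own proof: Lemma \ref{l2.3} together with the Riccati identity obtained by evaluating (\ref{e4.0}) at the minimizing point $\xi(t)$, uniform boundedness of the inhomogeneous term via (\ref{e2.2}) (in the paper this is quoted as the bound $N=\tfrac12K^2$ from Theorem \ref{t4.5}), Theorem \ref{t4.2} to push $w$ below the threshold, and then the two-sided integration of $-w'/w^2$ up to $T$. The only blemish is a typographical one: your final sandwich should read $-\tfrac{2}{1-\varepsilon}\le w(t)(T-t)\le-\tfrac{2}{1+\varepsilon}$ (you wrote the endpoints in reversed order), which does not affect the conclusion.
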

\begin{proof}
The uniform boundedness of the solution can be easily obtained by
the priori estimate (\ref{e2.2}). By Lemma \ref{l2.3}, we can see
that the function $$w(t) = \inf\limits_{x\in\mathbb{S}}u_x(t, x)=u_x(t,\xi(t))$$ is locally Lipschitz with
$w(t)<0,\  t\in[0,T)$. Note that $u_{xx}(t,\xi(t))=0$ for a.e. $t\in(0,T)$.

It follows from  Theorem \ref{t4.5} that
\begin{equation}\label{4.9}
\frac {d}{dt} w(t)\le-\frac1{2}w^2(t)+N,\quad t\in[0,T),
\end{equation}
where $N=\frac1{2}K^2$ and  $\displaystyle K  = \sqrt{2\mu_1 \left ( \Big | \frac{\sqrt{3}}{3}  |\mu_0|-\frac1{2}\mu_1 \Big | \right )}. $

Now fix any $\varepsilon\in(0,1/2)$. From Theorem \ref{t4.2},
there exists $t_0\in(0,T)$ such that
$w(t_0)<-\sqrt{2N+\frac{N}{\varepsilon}}$. Notice that $w(t)$ is
locally Lipschitz so that it is absolutely continuous on $[0,T)$. It
then follows from the above inequality that $w(t)$ is decreasing on
$[t_0,T)$ and satisfies that
$$w(t)<-\sqrt{2N+\frac{N}{\varepsilon}}<-\sqrt{\frac{N}{\varepsilon}}, \  t\in[t_0,T).$$
Since $w(t)$ is decreasing on $[t_0,T)$, it follows that
$$\lim\limits_{t\uparrow T}w(t)=-\infty.$$
From (\ref{4.9}), we obtain
$$\frac1{2}-\varepsilon\le\frac{d}{d t}\left(w(t)^{-1}\right)=-\frac{w'(t)}{w^2(t)}\le\frac1{2}+\varepsilon.$$
Integrating the above equation on $(t,T)$ with $t\in(t_0,T)$ and
noticing that $\lim\limits_{t\uparrow T}w(t)=-\infty$, we get
$$ \left (\frac1{2}-\varepsilon \right )(T-t)\le-\frac1{w(t)}\le \left (\frac1{2}+\varepsilon \right )(T-t).$$
Since $\varepsilon\in \left ( 0,\frac1{2} \right )$ is arbitrary, in
view of the definition of $w(t)$, the above inequality implies the
desired result of Theorem \ref{t4.9}.
\end{proof}

\begin{theorem}\label{t4.10}
Let $u(t,x)$  be the solution to the initial value problem (\ref{e1.2}) associated with the $\mu$DP equation with initial data $u_0 \in
H^s(\mathbb{S}), s > 3/2 $. Let $T>0$ be the maximal time of existence of the solution $u(t,x)$. If
$T<\infty$, we have
$$\underset{t\uparrow T}{\lim}\left\{\underset{x\in \mathbb{S}}{\inf}
[u_x(t,x)(T-t)]\right\}=-1$$ while the solution remains
uniformly bounded.
\end{theorem}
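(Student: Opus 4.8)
The plan is to reproduce the argument of Theorem \ref{t4.9}, the only essential new ingredient being an \emph{a priori} $L^{\infty}$-bound on $u$, which for the $\mu$DP equation is not free. Since $T<\infty$ by hypothesis, Lemma \ref{l2.7} yields immediately
$$\|u(t,\cdot)\|_{L^{\infty}}\le\left(\frac{3}{2}\mu_0^2+6|\mu_0|\mu_2\right)T+\|u_0\|_{L^{\infty}}=:M_0<\infty,\qquad t\in[0,T),$$
which is exactly the uniform boundedness asserted in the theorem. As in the earlier proofs one may take $s=3$, so that $u\in C^1([0,T);H^2(\mathbb{S}))$ and Lemma \ref{l2.3} applies: the function $w(t):=\inf_{x\in\mathbb{S}}u_x(t,x)=u_x(t,\xi(t))$ is absolutely continuous on $(0,T)$ with $\frac{d}{dt}w(t)=u_{tx}(t,\xi(t))$ a.e., and since $\xi(t)$ is an interior minimum of $u_x(t,\cdot)$ on the circle we have $u_{xx}(t,\xi(t))=0$ for a.e.\ $t$.

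Next I would evaluate the differentiated equation (\ref{e4.1}) at $x=\xi(t)$ and discard the (vanishing) term $-uu_{xx}$, which gives
$$\frac{d}{dt}w(t)=-w^2(t)+3\mu_0\bigl(u(t,\xi(t))-\mu_0\bigr)\qquad\text{a.e. on }(0,T),$$
and inserting the bound $|u(t,\xi(t))-\mu_0|\le M_0+|\mu_0|$ produces the two-sided differential inequality
$$-w^2(t)-N\le\frac{d}{dt}w(t)\le-w^2(t)+N,\qquad N:=3|\mu_0|\bigl(M_0+|\mu_0|\bigr).$$
Because $T<\infty$, Theorem \ref{t4.2} (with $\lambda=3$, so that $2\lambda-1>0$) forces $\liminf_{t\uparrow T}w(t)=-\infty$; combined with the upper inequality this shows that as soon as $w$ is sufficiently negative it is strictly decreasing on some interval $[t_0,T)$ and in fact $\lim_{t\uparrow T}w(t)=-\infty$.

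The final step is the reciprocal-of-the-slope computation of Theorem \ref{t4.9}. Fix $\varepsilon\in(0,1)$ and choose $t_0\in(0,T)$ with $w(t_0)<-\sqrt{2N+N/\varepsilon}$; then $w^2(t)>N/\varepsilon$ on $[t_0,T)$, so dividing the two-sided inequality by $w^2(t)$ yields
$$1-\varepsilon\le-\frac{w'(t)}{w^2(t)}=\frac{d}{dt}\Bigl(\frac{1}{w(t)}\Bigr)\le1+\varepsilon,\qquad t\in[t_0,T).$$
Integrating over $(t,T)$ and using $1/w(t)\to0$ as $t\uparrow T$ gives $(1-\varepsilon)(T-t)\le-1/w(t)\le(1+\varepsilon)(T-t)$, and letting $\varepsilon\downarrow0$ we conclude $w(t)(T-t)\to-1$, which is the assertion of Theorem \ref{t4.10}. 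The main obstacle is thus the very first step: unlike for the $\mu$CH equation, $\|u(t)\|_{L^{\infty}}$ is not conserved here, and one must use the finiteness of $T$ together with Lemma \ref{l2.7} to keep the forcing term $3\mu_0(u(t,\xi(t))-\mu_0)$ bounded; after that, the computation is identical to Theorem \ref{t4.9} but with the leading coefficient $-1$ in place of $-\frac{1}{2}$, which is exactly why the limiting value is $-1$ instead of $-2$.
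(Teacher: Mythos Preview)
Your proof is correct and follows essentially the same route as the paper's own argument: reduce to $s=3$, invoke Lemma~\ref{l2.7} together with $T<\infty$ to bound $\|u(t,\cdot)\|_{L^\infty}$ uniformly, apply Lemma~\ref{l2.3} to track $w(t)=\inf_x u_x(t,x)$, evaluate (\ref{e4.1}) at the minimizing point, and then run the reciprocal-of-the-slope integration. If anything, you are slightly more explicit than the paper in writing the two-sided inequality $-w^2-N\le w'\le -w^2+N$, which is indeed what is needed for the final sandwich on $\frac{d}{dt}(1/w)$; the paper records only the upper inequality (\ref{4.12}) but tacitly uses the lower one as well.
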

\begin{proof}
Again we may assume $s=3$ to prove the this theorem. The uniform boundedness of the solution can be easily obtained by
the priori estimate in Lemma \ref{l2.7}. By (\ref{e4.1}), we have
\begin{align*}
u_{tx}+u^2_x+uu_{xx}=3u\mu_0-3\mu_0^2.
\end{align*}

It is inferred from  Lemma \ref{l2.3} that the function $$w(t) = \inf\limits_{x\in\mathbb{S}}u_x(t, x)=u_x(t,\xi(t))$$
is locally Lipschitz with $w(t)<0,\  t\in[0,T)$. Note that $u_{xx}(t,\xi(t))=0$ for a.e. $t\in(0,T)$. Then we deduce that
\begin{equation}\label{4.11}
w'(t)=- w^2(t)+3u(t,\xi(t))\mu_0-3\mu_0^2,\quad t\in[0,T),
\end{equation}
It follows from Lemma \ref{l2.7} that
\begin{align*}
|3u(t,\xi(t))\mu_0-3\mu_0^2|&\le3|\mu_0|\|u(t,x)\|_{L^{\infty}}+3\mu_0^2\\
&\le3|\mu_0|\left(\frac{3}{2}\mu_0^2+6|\mu_0|\mu_2\right)T+3|\mu_0|\|u_0\|_{L^{\infty}}+3\mu_0^2, \quad t\in[0,T].
\end{align*}
Set
$$N(T)=3|\mu_0|\left(\frac{3}{2}\mu_0^2+6|\mu_0|\mu_2\right)T+3|\mu_0|\|u_0\|_{L^{\infty}}+3\mu_0^2.$$
Combining above estimates, we deduce that
\begin{equation}\label{4.12}
w'(t)\le-w^2(t)+N(T),\quad t\in[0,T),
\end{equation}

Now fix any $\varepsilon\in(0,1)$. In view of  Theorem \ref{t4.2},
there exists $t_0\in(0,T)$ such that
$w(t_0)<-\sqrt{N(T)+\frac{N(T)}{\varepsilon}}$. Notice that $w(t)$ is
locally Lipschitz so that it is absolutely continuous on $[0,T)$. It then
follows  from the above inequality that $w(t)$ is decreasing on
$[t_0,T)$ and satisfies that
$$w(t)<-\sqrt{N(T)+\frac{N(T)}{\varepsilon}}<-\sqrt{\frac{N(T)}{\varepsilon}}, \  t\in[t_0,T).$$
Since $w(t)$ is decreasing on $[t_0,T)$, it follows that
$$\lim\limits_{t\uparrow T}w(t)=-\infty.$$
It is found from  (\ref{4.12}) that
$$1-\varepsilon\le\frac{d}{d t}(w(t)^{-1})=-\frac{w'(t)}{w^2(t)}\le1+\varepsilon.$$
Integrating the above equation on $(t,T)$ with $t\in(t_0,T)$ and
noticing that $\lim\limits_{t\uparrow T}w(t)=-\infty$, we get
$$ \left (1-\varepsilon \right )(T-t)\le-\frac1{w(t)}\le \left (1+\varepsilon \right )(T-t).$$
Since $\varepsilon\in \left ( 0,1\right )$ is arbitrary, in
view of the definition of $w(t)$, the above inequality implies the
desired result of Theorem \ref{t4.10}.
\end{proof}

\section*{Appendix A. Geometric descriptions of these two equations}

Integrable equations solved by the inverse scattering transformation
method have elegant geometric interpretations. Several different
geometric frameworks have been used to provide geometric
interpretations to integrable systems. Besides the Arnold's approach
to Euler equations on Lie groups \cite{arn} (see also more recent
exposition \cite{khe2}), other two geometric descriptions are
important in the study of integrable systems. In an intriguing work
due to Chern and Tenenblat \cite{che1}, they provide a
classification of a class of nonlinear evolution equations
describing pseudo-spherical surfaces. As a consequence, many
integrable equations are shown to be geometrically integrable. The
approach also provides a direct approach to compute conservation
laws of integrable systems. Another interesting geometric
interpretation is provided through invariant geometric curve or
surface flows. For instances, the mKdV equation, the Schr\"{o}dinger
equation, the KdV equation and the Sawada-Kotera equation arise
naturally from non-stretching invariant curve flows in Klein
geometries (see \cite{gol,has,cho1,cho2} and references therein). It
is noticed that the celebrated CH equation arises from a
non-stretching invariant plane curve flow in centro-equiaffine
geometry \cite{cho1} and describes pseudo-spherical surfaces
\cite{rey}.

In this Appendix, we show that the $\mu$CH equation and $\mu$DP
equation also arise from non-stretching invariant curve flows
respectively in plane centro-equiaffine geometry $CA^2$ and
three-dimensional centro-equiaffine geometry $CA^3$. Furthermore, we
show that the $\mu$CH equation describes pseudo-spherical surfaces,
and the $\mu$DP equation describes affine surfaces.

First, we study non-stretching invariant plane curve flows in
centro-equiaffine geometry $CA^2$. Consider a star-shaped plane
curve $\gamma(p)$ with a parameter $p$, i.e., the curve satisfies
$[\gamma, \gamma_p]\not=0$, here $[\gamma_1,\gamma_2]$ denotes the
determinant of two vectors $\gamma_1$ and $\gamma_2$. Along the
curve one may represent it by a special parameter $\sigma$
satisfying
\begin{equation*} \tag{A.1} [\gamma,\gamma_{\sigma}]=1.
\end{equation*} In terms of the free parameter $p$, the centro-equiaffine
arc-length is given by
\begin{equation*} d\sigma=[\gamma,\gamma_p]dp.\end{equation*} It follows from
(A.1) that there exists a function $\phi$ such that
\begin{eqnarray*} \gamma_{\sigma\sigma}+\phi \gamma=0,\end{eqnarray*}
where\begin{eqnarray*} \phi=[\gamma_{\sigma},\gamma_{\sigma\sigma}]
\end{eqnarray*} is centro-equiaffine curvature of $\gamma(\sigma)$. It is easily to
verify that $\phi$ is invariant with respect to the linear
transformation \begin{eqnarray*} \gamma'=A\gamma, \quad A\in
SL(2,{\Bbb R}). \end{eqnarray*} The centro-equiaffine tangent and
normal vectors of $\gamma$ are defined to be \begin{eqnarray*} {\bf
T}=\gamma_{\sigma},\quad {\bf N}=-\gamma. \end{eqnarray*} Hence we
have the Serret-Frenet formulae for $\gamma$
\begin{equation*} {\bf T}_{\sigma}=\phi {\bf N},\qquad {\bf
N}_{\sigma}=-T.\end{equation*}

Consider the plane curve flow for $\gamma(\sigma, t)$ in $CA^2$
\begin{equation*} \gamma_t=f{\bf N}+g{\bf T}, \tag{A.2} \end{equation*} where
$f$ and $g$ are respectively the normal and tangent velocities,
which depend on the centro-equiaffine curvature $\phi$ and its
derivatives with respect to arc-length $\sigma$.

Let $d\sigma=\xi dp$, and $L=\oint d\sigma$ be the centro-equiaffine
perimeter for a closed curve. Assume that the centro-equiaffine
arc-length does not depend on time and $L$ is invariant along the
flow, then the velocities $f$ and $g$ satisfy \begin{equation*}
\tag{A.3}
\begin{aligned}
g_{\sigma}-2f=0,\qquad \oint_{\gamma} f\ d\sigma=0.
\end{aligned}
\end{equation*}
A direct computation shows that the curvature satisfies the equation
\begin{equation*}\tag{A.4}
\phi_t=(D_{\sigma}^2+4\phi+2\phi_{\sigma}\partial_{\sigma}^{-1})f
\end{equation*}
after using (A.3).

Setting $\phi\equiv m=\delta u-u_{\sigma\sigma}$ and
$f=-u_{\sigma}$, we obtain the CH and HS equations
\begin{eqnarray*} m_t+u_{\sigma\sigma\sigma}+4m
u_{\sigma}+2u m_{\sigma}=0,\end{eqnarray*} respectively for
$\delta=1$ and $\delta=0$. After the change of variables
\begin{equation*} \tag{A.5} t\rightarrow t,\;\;\sigma \rightarrow
x-t,\;\;u\rightarrow \frac 12 u,\end{equation*}  we get the CH and
HS equations in the standard form
\begin{equation*}\tag{A.6}
m_t+2m u_x+u m_x=0,\end{equation*} where $m=\delta u-u_{xx}$.

Now we assume that $\phi$ is periodic, i.e.,
$\phi(\sigma+1)=\phi(\sigma)$. Let
$\phi=\int_0^1u\;d\sigma+h_{\sigma}$, where $h(t,\sigma)$ is also a
periodic function of $\sigma$. It implies that
\begin{eqnarray*} \int_0^1u\;d\sigma=\int_0^1\phi\;
d\sigma=\mu(u).\end{eqnarray*} Namely, $\mu(u)$ is the mean
curvature of $\gamma$. Taking \begin{eqnarray*} h=-u_{\sigma},\quad
f=-u_{\sigma}.\end{eqnarray*} We arrive at the equation
\begin{eqnarray*}
m_t+u_{\sigma\sigma\sigma}+4mu_{\sigma}+2um_{\sigma}=0.\end{eqnarray*}
The change of variables (A.5) leads to the $\mu$CH equation
\begin{equation*}\tag{A.7} m_t+2mu_{x}+um_{x}=0,\;\;
m=\mu(u)-u_{xx}.\end{equation*}

Here we make a remark about the geometric description of the
$\mu$CH equation. \\

\noindent {\bf Remark A.1.} {\em It was shown by Reyes \cite{rey}
that CH and HS equations describe pseudo-spherical surfaces.
Similarly, we can show that the $\mu$CH equation also describes
pseudo-spherical surfaces, i.e., there exist one-forms
\begin{eqnarray*}\begin{aligned}
\omega_1&=\frac 12 (\lambda m -\frac 12 \lambda^2+2)\ dx+\frac 12
[\frac 12 \lambda^2 u-\lambda( u_x+um+\frac 12)+\mu(u)-2u +\frac
2{\lambda}]\;dt,\\
\omega_2&=\lambda \ dx +(1- \lambda  u+u_x)\;dt,\\
\omega_3&=\frac 12 (\lambda m -\frac 12 \lambda^2-2)\ dx+\frac 12
[\frac 12 \lambda^2 u-\lambda( u_x+um+\frac 12)+\mu(u)+2u -\frac
2{\lambda}]\;dt,
\end{aligned}\end{eqnarray*}
which satisfy the structure equations for pseudo-spherical surface
\begin{eqnarray*}
 d\omega_1=\omega_3 \wedge \omega_2,\quad
d\omega_1=\omega_3 \wedge  \omega_2,\quad d\omega_1=\omega_3 \wedge
\omega_2.\end{eqnarray*}}

Based on the structure equations, using the equations for
pseudo-potential, we are able to obtain two sets of conservation
laws of the $\mu$CH equation.

Next, we consider non-stretching invariant space curve flows in
three-dimensional centro-equiaffine geometry $CA^3$. The isometries
of the centro-equiaffine geometry consists of the linear
transformations $x'=Ax$, $A\in SL(3,{\Bbb R})$. For a general curve
$\gamma(p)$ with a parameter $p$, satisfying $[\gamma,\gamma_p,
\gamma_{pp}]\not=0$, along the curve one may reparametrize it by a
special parameter $\sigma$ satisfying
\begin{equation*}\tag{A.8}
[\gamma,\gamma_{\sigma},\gamma_{\sigma\sigma}]=1,
\end{equation*}
everywhere, when $[\gamma_1,\gamma_2,\gamma_3]$ denotes the
determinant of the vectors $\gamma_1$, $\gamma_2$ and $\gamma_3$. In
terms of an free parameter $p$, the centro-equiaffine arc-length
$\sigma$ is defined to be
\begin{equation*}
d\sigma=[\gamma,\gamma_p,\gamma_{pp}]^{\frac 13 }dp.
\end{equation*}
It follows from (A.8) that there exist two functions $\alpha$ and
$\beta$ such that
\begin{equation*}
\gamma_{\sigma\sigma\sigma}=\alpha\gamma+\beta\gamma_{\sigma},
\end{equation*}
where
\begin{eqnarray*}
\alpha=[\gamma_{\sigma},\gamma_{\sigma\sigma},\gamma_{\sigma\sigma\sigma}],\quad
\beta=[\gamma,\gamma_{\sigma\sigma\sigma},\gamma_{\sigma\sigma}].\
\end{eqnarray*}
It is readily to verify that $\alpha$ and $\beta$ are invariant with
respect to centro-equiaffine linear transformation. We define them
to be the curvatures of $\gamma$. Whence we have the Serret-Frenet
formulae
\begin{equation*}
\tag{A.9} \left(\begin{array}{c}\gamma\\
\gamma_{\sigma}\\
\gamma_{\sigma\sigma}\end{array}\right)=\left(\begin{array}{ccc}0&1&0\\
0&0&1\\
\alpha&\beta&0\end{array}\right).\end{equation*} Now consider the
invariant curve flow in $CA^3$ governed by
\begin{equation*}\label{A.10}
\gamma_t=F\gamma+G\gamma_{\sigma}+H\gamma_{\sigma\sigma}
\end{equation*}
where $F$, $G$ and $H$ are velocities, depending on the
centro-equiaffine curvatures $\alpha$ and $\beta$ and their
derivatives with respect to $\sigma$.

Assume that the arc-length $\sigma$ does not depend on time $t$,
namely $[\frac{\partial }{\partial \sigma}, \frac{\partial
}{\partial t}]=0$, which implies that the velocities satisfy
\begin{equation*}\tag{A.11}
F+G_{\sigma}+\frac 23 \beta H+\frac 13 H_{\sigma\sigma}=0.
\end{equation*}
Similar to the plane case, for a closed curve $\gamma$, one requires
that the centro-equiaffine perimeter is invariant under the curve
flow. It turns out that
\begin{equation*}\tag{A.12}
\oint(F+\frac 23 \beta H)\;d\sigma=0. \end{equation*} The evolution
for the curvatures is \begin{equation*}\tag{A.13}
\begin{aligned}
\alpha_t=&[F_{\sigma\sigma}+\alpha(G+2H_{\sigma})+\alpha_{\sigma}H]_{\sigma}+2\alpha
G_{\sigma}-\beta F_{\sigma}+\alpha H_{\sigma\sigma},\\
\beta_t=&[3F_{\sigma}+G_{\sigma\sigma}+\beta(G+2H_{\sigma})+(\alpha+\beta_{\sigma})H]_{\sigma}+2\alpha
H_{\sigma}\\&+\beta H_{\sigma\sigma}+\beta
G_{\sigma}+\alpha_{\sigma}H.
\end{aligned}
\end{equation*}

Now we consider two possibilities. First, we set $\beta=1$,
$F=u_{\sigma}+2/3$, $G=-u$, $H=-1$ and
$\alpha=-(u-u_{\sigma\sigma})$, so that (A.11), (A.12) and the
second one of (A.13) hold. Thus the first one of (A.13) becomes
\begin{equation*} m_t+3mu_{\sigma}+um_{\sigma}=0,
\tag{A.14}
\end{equation*}
which is exactly the DP equation.

Second, consider the periodic case, we choose $\beta=0$,
$F=u_{\sigma}+2/3$, $G=-u$, $H=-1$ and
$\alpha=-(\mu(u)-u_{\sigma\sigma})$. Then the second one of (A.13)
holds identically, and the first one of (A.13) becomes
\begin{equation*}\tag{A.14}
m_t+3mu_{\sigma}+um_{\sigma}=0,
\end{equation*}
where $m=\mu(u)-u_{xx}$, which is the $\mu$DP equation.

In the following, we show that $\mu$DP equation describes affine
surfaces. Let $A^3$ be the unimodular affine space of dimension
three, the unimodular affine group $G$ is generated by the following
transfomrations
\begin{eqnarray*}
\gamma'=A\gamma+B,\end{eqnarray*} where $A\in SL(3,{\Bbb R})$, $B\in
{\Bbb R}^3. $ Let $\gamma$ is the position vector of an affine
surface $M$ in $A^3$. Let $\gamma$, $e_1$, $e_2$, $e_3$ be an affine
frame on $M$ such that $e_1$, $e_2$ are tangent to $M$ at $\gamma$,
and
\begin{eqnarray*}
[e_1,e_2,e_3]=1.
\end{eqnarray*}
We write
\begin{equation*}
d\gamma=\sum\limits_j\omega^j e_j,\qquad
de_j=\sum\limits_k\omega_j^k e_k,
\end{equation*}
where $\omega^j$ and $\omega_j^k$ are the Maurer-Cartan forms of
$G$, which satisfy
\begin{equation*}\tag{A.15}
\begin{aligned}
&\sum\limits_j\omega_j^j=0,\quad
d\omega_j^l=\sum\limits_j^3\omega_j^k\omega_k^l,\\& \omega^1\wedge
\omega_1^3+\omega^2\wedge \omega_2^3=0,\quad
\omega^3=0,\\
&d\omega^2=\omega^1\wedge \omega_1^2+\omega^2\wedge \omega_2^2,\quad
d\omega^1=\omega^1\wedge \omega_1^1+\omega^2\wedge \omega_2^1,
\end{aligned}
\end{equation*}
for $j,l=1,2,3$.\\

\noindent {\bf Definition A.1.} \cite{che2} A PDE for a function
$u(t,x)$ describes affine surfaces if there exist smooth functions
$f_j^k$, $g_j^k$, $h_{pq}$, $1\leq j,k\leq 3$, $1\leq p,q\leq 2$,
depending only on $u$ and their derivatives such that the 1-forms
$\omega_j^k=f_j^k \ dx+g_j^kdt$, $\omega^p=h_{p1}\ dx+h_{p2}dt$
satisfy
the structure equations (A.15) for affine surfaces $M$.\\

We have the following theorem.\\

\noindent {\bf Theorem A.1.} The $\mu$DP equation describes affine
surfaces.\\

\begin{proof} Indeed, we take
\begin{eqnarray*}
&&f_1^1=0,\;\; g_1^1=\frac 1{2\lambda}-u_x,\;\; f_1^2=\lambda
m,\;\;g_1^2=-\frac{1}{4\lambda}(4\lambda^2 um-4\lambda u_x+1),\;\;
f_1^3=\frac{1}{2\lambda},\\
&&g_1^3=-\frac{1}{2\lambda}(2\mu(u)+u),\;\; f_2^1=0,\;\; g_2^1=\frac
1{\lambda},\;\; f_2^2=0,\;\;g_2^2=u_x-\frac{1}{2\lambda},\;\;
f_2^3=\frac 1{\lambda},\\
&&g_2^3=-\frac{u}{\lambda},\;\; f_3^1=-\lambda,\;\; g_3^1=\lambda
u,\;\; f_3^2=\frac {\lambda}2\;\;g_3^2=\lambda(\mu(u)-\frac
12u),\;\; f_3^3=0,\;\; g_3^3=0,\\
&&h_{11}=1,\;\; h_{12}=-u, \;\; h_{21}=-\frac 12 \;\;h_{22}=\frac 12
u-\mu(u).
\end{eqnarray*} A straightforward computation shows that the Maurer-Cartan forms defined by these functions satisfy the
structure equation for affine surface.
\end{proof}

\end{document}